\raggedbottom\usepackage{subcaption}
\ifpdf  \DeclareGraphicsExtensions{.eps,.pdf,.png,.jpg}
\DeclareMathOperator*{\argmax}{arg\,max}
\DeclareMathOperator*{\argmin}{arg\,min}
\def\ddefloop#1{\ifx\ddefloop#1\else\ddef{#1}\expandafter\ddefloop\fi}
\def\ddef#1{\expandafter\def\csname bf#1\endcsname{\ensuremath{\mathbf{#1}}}}
\def\ddef#1{\expandafter\def\csname v#1\endcsname{\ensuremath{\boldsymbol{#1}}}}
\def\ddef#1{\expandafter\def\csname v#1\endcsname{\ensuremath{\boldsymbol{\csname #1\endcsname}}}}
\def\ddef#1{\expandafter\def\csname bb#1\endcsname{\ensuremath{\mathbb{#1}}}}
\crefname{hypothesis}{Hypothesis}{Hypotheses}
\title{A Least-Squares Finite Element Reduced Basis Method
  \thanks{Submitted to the editors March 4, 2020\@.
\funding{J. Chaudhry's work is supported by the NSF-DMS 1720402.}}}
\author{Jehanzeb Hameed Chaudhry\thanks{Department of Mathematics, University of New Mexico (\email{jehanzeb@unm.edu}, \url{https://math.unm.edu/\~jehanzeb}).}
\and Luke N. Olson\thanks{Department of Computer Science (\email{lukeo@illinois.edu}, \url{http://lukeo.cs.illinois.edu}).}
\and Peter Sentz\thanks{Department of Computer Science (\email{sentz2@illinois.edu}).}
}
\ifpdf\hypersetup{  pdftitle={A Least-Squares Finite Element Reduced Basis Method},
  pdfauthor={J.~H.~Chaudhry, L.~N.~Olson, and P. Sentz}
}
\begin{document}

\maketitle

\begin{abstract}
We present a reduced basis (RB) method for parametrized linear elliptic partial differential equations (PDEs) in a least-squares finite element framework.  A rigorous and reliable error estimate is developed, and is shown to bound the error with respect to the exact solution of the PDE, in contrast to estimates that measure error with respect to a finite-dimensional (high-fidelity) approximation.  It is shown that the first-order formulation of the least-squares finite element is a key ingredient.  The method is demonstrated using numerical examples.
\end{abstract}

\begin{keywords}
  least-squares, finite elements, reduced basis
\end{keywords}

\begin{AMS}
  65N15, 65N30
\end{AMS}

\section{Introduction}
In this work, we formulate a reduced basis method for the solution of linear elliptic partial differential equations (PDEs) based on the least-squares finite element method (LSFEM).  In many engineering and scientific applications, PDEs often depend on one or more parameters, which reflect either physical properties (e.g.,\ the viscosity of a fluid, the heat conductivity of a medium), source terms and boundary conditions, or the geometry of the domain in which the problem is posed.  In the case of parametrized geometry, transformation techniques~\cite{chaudhry2018efficient, manzoni2016automatic,quarteroni2015reduced} are used to obtain a PDE on a parameter-independent reference domain $\Omega$.  Letting $\vmu$ be a vector containing the relevant parameters, we study linear elliptic PDEs of the form:
\begin{equation}\label{parametrized_equation}
	\mathcal{L}_{\vmu}u_{\vmu} = f_{\vmu},\hspace{7mm} \vx \in \Omega,
\end{equation}
where $\Omega$ is a bounded subset of $\mathbb{R}^d$, $d = 2,3$.
The subscript $\vmu$ conveys the fact that the operator $\mathcal{L}_{\vmu}$ and the functions $u_{\vmu}$ and $f_{\vmu}$ depend on the value of the parameter(s) contained in $\vmu$.  In this work, we consider elliptic problems in \eqref{parametrized_equation}~---~e.g., the Poisson's Equation with different values for the thermal conductivity of a medium, or the Stokes Equations with a varying Reynolds number.

LSFEMs are widely used for the solution of PDEs arising in many applications in science and engineering like fluid flow, transport, hyperbolic equations, quantum chromodynamics, magnetohydrodynamics, biomolecular simulation, plasma, elasticity, liquid crystals etc.~\cite{adler2014error, atherton2012competition, bochev1999analysis, bochev2009least,bond2010first, brannick2010least,cai2004least, de2004least,de2005numerical, heys2007alternative,krause2017adaptive, leibs2015nested, manteuffel1998least}.  LSFEMs are based on minimizing the residual of the PDE in an appropriate norm, and have a number of attractive properties.  The finite element discretization of the weak form yields symmetric positive definite linear systems that are often suitable for optimal multigrid solvers.  Moreover, the bilinear form arising from LSFEM is coercive and continuous, thus allowing flexibility in the choice of finite element (FE) spaces. This is in contrast to a mixed method which requires that the FE spaces satisfy the inf-sup or the  Ladyzhenskaya-Babu\v{s}ka-Brezzi condition~\cite{boffi2013mixed}.  An additional advantage of LSFEMs is that complex boundary conditions may be handled weakly by incorporating them into the definition of the least-squares residual.

Least-squares finite element methods provide a robust and inexpensive \textit{a posteriori} error estimate. This is a crucial ingredient in our approach to constructing a reduced basis method for LSFEMs.  Moreover, while the additional auxiliary variables and resulting large linear systems is a potential drawback to LSFEMs, a reduced basis approach which preserves the accuracy of the full finite element discretization while being inexpensive to compute is especially appealing for this class of discretizations.

In many applications, solutions are computed for a wide range of parameter values (many-query context), or must be computed cheaply following a parameter measurement or estimation (real-time context)~\cite{boyaval2008reduced, grepl2007certified, oliveira2007reduced, rozza2007reduced, veroy2003posteriori}.  In the case of a finite element discretization, a system of linear equations is obtained that involves a large number of unknowns.  If solutions must be obtained quickly or for many parameter sets, the solution of these linear systems is prohibitively expensive.  Reduced basis methods are a form of model order reduction that offers the potential to decrease the dimension of the problem, exploiting the low dimensionality of the solution manifold through parametric dependence~\cite{prud2001reliable}.  As a result, solutions based on the low order representation are constructed with low computational cost.

RB methods are separated into two stages: ``offline'' and ``online''~\cite{deparis2009, hesthaven2014efficient, quarteroni2015reduced, rozza2007reduced}.  During the offline stage, a set of representative solutions is constructed by sampling the parameter domain and computing high dimensional finite element solutions called full-order model (FOM) solutions or snapshots.  Two standard approaches for the offline basis construction include Proper Orthogonal Decomposition (POD)~\cite{liang2002proper,volkwein2011model} and greedy sampling methods~\cite{hesthaven2014efficient}.  Greedy sampling methods often lead to a more computationally efficient offline stage and are used in numerous applications~\cite{deparis2009, grepl2005, hesthaven2014efficient, huynh2007reduced, quarteroni2015reduced}.  This work is thus restricted to reduced basis methods with a greedy sampling procedure.  Details of POD applied to parametrized elliptic systems is found in~\cite{kahlbacher2007galerkin}.\par
During the online stage, the previously constructed reduced basis is used to generate an inexpensive yet accurate solution for an estimated or measured set of parameters.  The accuracy of this solution strongly depends on the sampling strategy and as well as the selection criteria for choosing the reduced basis.

The accuracy of a reduced basis solution is typically measured in reference to a full-order finite element solution~\cite{dihlmann2015,grepl2005,huynh2007reduced,rozza2013reduced}.  The error $\| u_{\vmu}^h - u_{\vmu}^{\text{RB}}\|$ under an appropriate norm is heuristically minimized, where $u_{\vmu}^h$ and $u_{\vmu}^{\text{RB}}$ are the full-order and reduced basis solutions, respectively.  In essence, the full-order FE solution is treated as the exact solution for every parameter value; it is used as the benchmark for accuracy of the reduced order solution.
However, the accuracy of the full order finite element solution is itself heavily dependent on the value of parameters for certain problems, resulting in an error estimate for the reduced basis solution that is often overly optimistic.  In this article, a sharp error estimate with respect to the \textit{exact} solution of the PDE is used in the construction of the reduced basis during the offline stage. This error estimate is provided by the relaxed smoothness requirements afforded by a first-order formulation, as well as a posteriori error estimate provided naturally by the LSFEM, and is inexpensive to compute, and provides an attractive feature of a LSFEM-based RB method.

To demonstrate the utility of measuring the accuracy of the reduced basis solution in terms of the exact solution, we consider a variable coefficient Poisson's problem, see  \S~\ref{results-thermal-1} for the detailed setup.  The problem is dependent on a single parameter $\mu \in [10^{-1}, 10^1]$, which represents the thermal conductivity of one-half of an inhomogeneous material. The solution is benign for $\mu = 1$, but features a discontinuous gradient for all other values.  Thus, high accuracy requires a very fine mesh.

The left plot of Figure~\ref{intro-plot} shows the error\footnote{The error is measured in the $H(\text{div})\times H^1$-norm, which is the appropriate norm for the least-squares setup for the Poisson's problem. See \S~\ref{results-thermal-1} for details.} between a discrete solution and an ``true'' solution for different values of the parameter $\mu$.  The discrete solution $u_{\mu}^h$ is computed on a mesh with 1,065 degrees of freedom and a reference or ``true''  solution $u_{\mu}^e$ is computed on a mesh with 122,497 degrees of freedom.  The error is particularly large for $\mu = 10^{-1}$.  A reduced basis solution $u_{\mu}^{RB}$ is constructed on the same mesh as $u_{\mu}^h$; the right plot of Figure~\ref{intro-plot} shows the error of this reduced-order solution with respect to both the reference solution and the full-order solution.
\begin{figure}[ht]
\centering
\begin{subfigure}{.49\textwidth}
	\centering
	\includegraphics[scale=0.35]{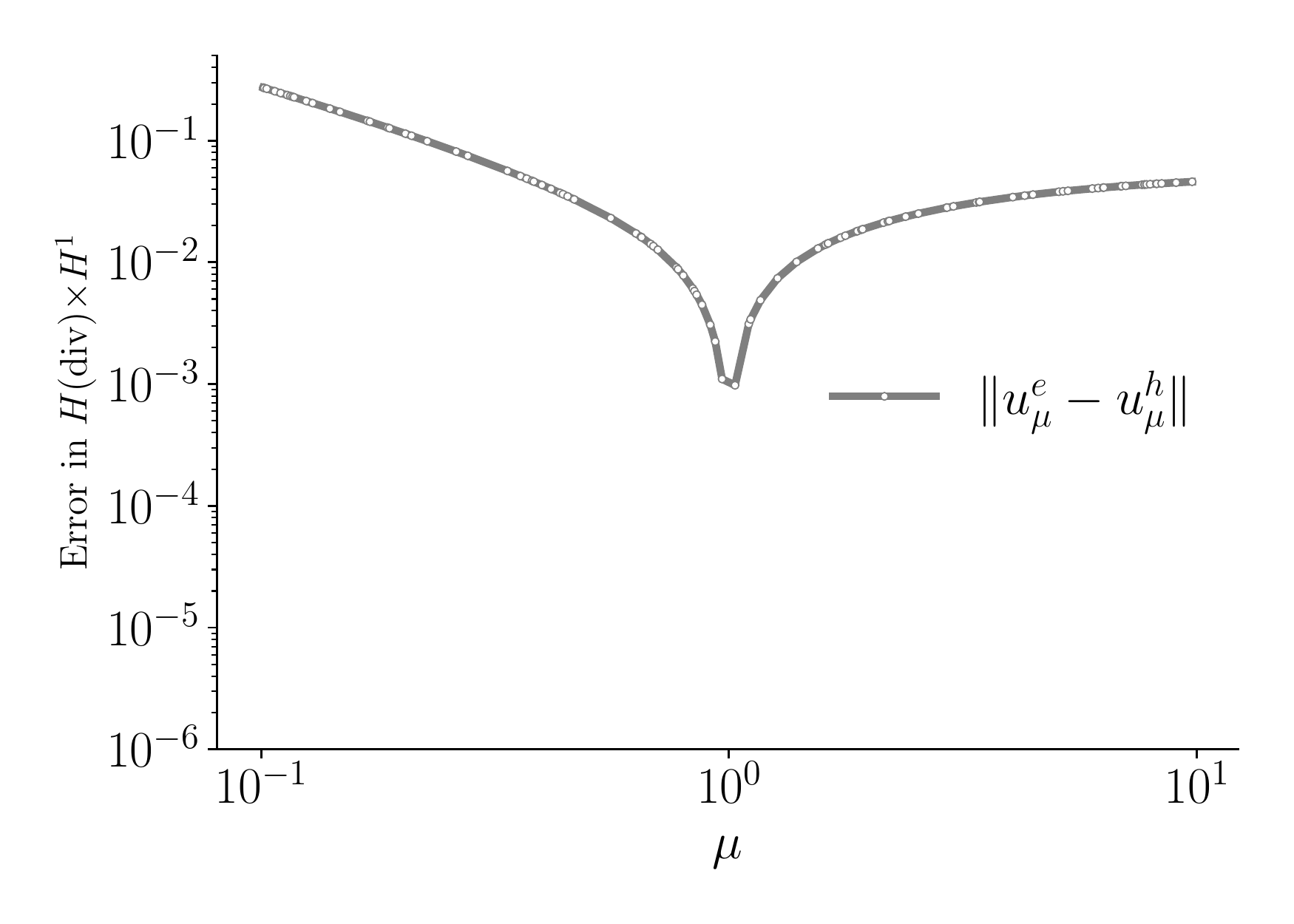}
  \caption{Full-order error.}\label{intro-left}
\end{subfigure}\hfill
\begin{subfigure}{.49\textwidth}
	\centering
	\includegraphics[scale=0.35]{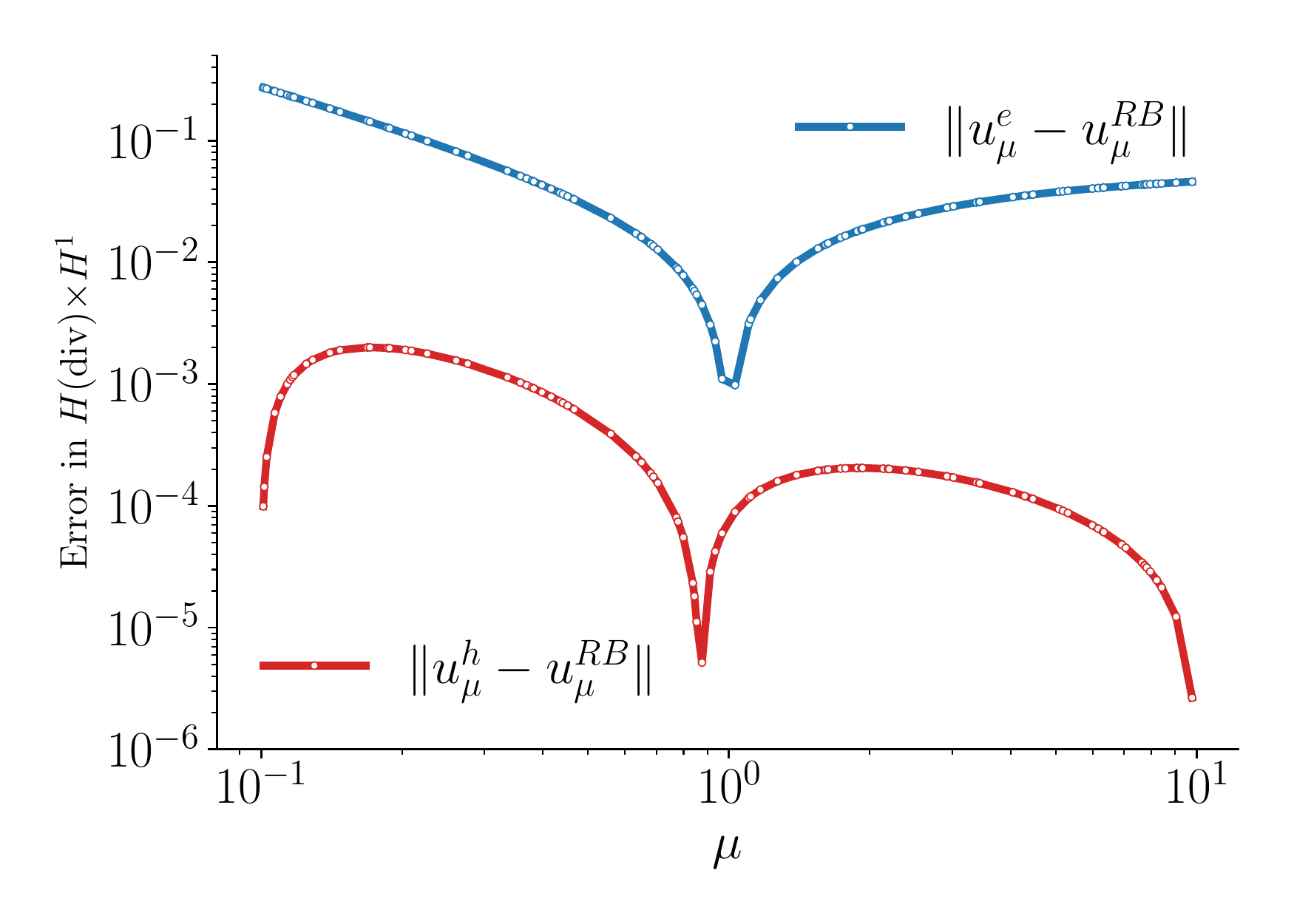}
  \caption{Reduced basis error.}\label{intro-right}
\end{subfigure} \caption{The $H(\text{div})\times H^1$-norm of the error between a full-order solution and a reference solution (left), and the error in the reduced-basis solution with respect to both the full-order solution and the reference (right).}\label{intro-plot}
\end{figure}

In Figure~\ref{intro-plot}, the error between the reduced-basis solution and the reference solution is four orders of magnitude greater than the error with respect to the discrete solution.  Thus, a sharp, rigorous error bound based on $\|u_{\mu}^{RB} - u_{\mu}^h\|$ would significantly underestimate the error with respect to the true solution.  For this reason, an error estimate is not reliable without first ensuring that the full-order solution is sufficiently accurate.

The paper is organized as follows.  In \S~\ref{sec:parameqs}, we describe parametrized equations in a Hilbert space setting, and describe a rigorous error estimate for approximate solutions.  In \S~\ref{sec:ls}, we introduce least-squares finite element methods and how they fit into the abstract Hilbert space context.  In \S~\ref{sec:rbm}, we review standard reduced basis methods, in particular, those based on residual error indicators and greedy-sampling.  In \S~\ref{sec:lsrbm}, we propose a LSFEM-based reduced basis method and in \S~\ref{sec:numerics} we provide several numerical examples.  \S~\ref{sec:conc} consists of conclusions and possibilities for future work.

\section{Parameterized Equations and Error Bounds}\label{sec:parameqs}

In this section we set up the parameterized equations in a Hilbert space
setting.  In \S~\ref{sec:errorbounds} we discuss error bounds in this
context and in \S~\ref{sec:appliedtopoisson} we detail issues that arise
when considering elliptic problems in a standard Galerkin setting.

Let $X$ and $Y$ be Hilbert spaces, and let $\mathcal{D}$ be a compact subset of $\mathbb{R}^P$, with $P \geq 1$.  For any $\vmu \in \mathcal{D}$, we assume the existence of a linear operator
\begin{equation}
	\mathcal{L}_{\vmu}: X \to Y.
\end{equation}
For a fixed $f_{\vmu} \in Y$, we seek $u_{\vmu} \in X$ that satisfies
\begin{equation}\label{strong_form}
	\mathcal{L}_{\vmu}u_{\vmu} = f_{\vmu}.
\end{equation}

We further assume that for any $\vmu \in \mathcal{D}$ there exists a parameter dependent \textit{coercivity} constant $\alpha(\vmu)$ and a \textit{continuity} constant $\gamma(\vmu)$ with $0 < \alpha(\vmu)\leq \gamma(\vmu) < \infty$ such that
\begin{equation}\label{energy_balance}
	\alpha(\vmu)\|v\|_X^2 \leq \|\mathcal{L}_{\vmu}v\|_Y^2\leq \gamma(\vmu)\|v\|_X^2, \hspace{6mm} \forall v \in X.
\end{equation}
That is, $\mathcal{L}_{\vmu}$ and its inverse are bounded.

In order to approximate $u_{\vmu}$, we introduce a finite-dimensional subspace $X^h \subset X$ and seek a function  $u^h_{\vmu} \in X^h$.
  The subspace $X^h$
  may correspond to any general discretization procedure, e.g., finite differences, finite elements, or from a reduced order model.

For a particular parameter $\vmu$, we define the error to be
\begin{equation}\label{error_definition}
	e^h_{\vmu} \coloneqq u_{\vmu} - u_{\vmu}^h,
\end{equation}
which is a measure of the quality of this approximation.
Developing a rigorous and strict upper bound for the norm of the error  $\|e^h_{\vmu}\|_X$ is important for assessing the quality of the numerical approximation.  Likewise,
the residual is defined as
\begin{equation}\label{residual_definition}
r_{\vmu}^h \coloneqq f_{\vmu} - \mathcal{L}_{\vmu}u_{\vmu}^h,
\end{equation}
and we see that $e_{\vmu}^h$ satisfies the error equation
\begin{equation}\label{error_equation}
	\mathcal{L}_{\vmu}e_{\vmu}^h = r_{\vmu}^h.
\end{equation}

\subsection{Error Bounds}\label{sec:errorbounds}

Our approach to developing rigorous upper bounds on the error is to begin with~\eqref{energy_balance}~and~\eqref{error_equation}, which leads to

\begin{align}\label{loose_bound}
\begin{split}
\alpha(\vmu)\|e_{\vmu}^h\|_X^2 & \leq \|\mathcal{L}_{\vmu}e_{\vmu}^h\|_Y^2 = \|r_{\vmu}^h\|_Y^2,\\
\Rightarrow \|e_{\vmu}^h\|_X   & \leq \frac{\|r_{\vmu}^h\|_Y}{\sqrt{\alpha(\vmu)}}.
\end{split}
\end{align}
Unfortunately, this upper bound proves to be extremely pessimistic, especially for problems for which the coercivity constant $\alpha(\vmu)$ is relatively small, a common scenario.  This is illustrated with a simple finite dimensional example.\par
Let $X = Y = \mathbb{R}^n$ under the standard Euclidean norm.  Consider the operator $A : X \to Y$ represented by the matrix
\begin{equation}
	A =
  \left[
  \begin{array}{rrrrrr}
  2  & -1 & \        & \      & \      & \ \\
	-1 & 2  & -1       & \      & \      & \ \\
	\  & -1 & 2        & -1     & \      & \ \\
	\  & \  & \ \ddots & \ddots & \ddots & \ \\
	\  & \  & \        & -1     & 2      & -1\\
	\  & \  & \        & \      & -1     & 2
  \end{array}
  \right],
\end{equation}
which is positive-definite with smallest eigenvalue $\lambda_1 = 4\sin^2\left(\frac{\pi}{2(n+1)}\right)$.
In addition consider the right-hand side $f = [1,0,\dots, 0, 1]^T$, which yields a solution to $Au = f$ of $u = [1,1,\dots,1]^T$.  Then, consider the perturbation $\hat{u} \in \mathbb{R}^n$ given by
\begin{equation}
	\hat{u}_i = 1 + \frac{(-1)^i}{n}.
\end{equation}
The error in this case is given by $\|u - \hat{u}\|_X = \frac{1}{\sqrt{n}}$ and the residual by $\|r\|_X = \|f - Au\|_X = \frac{\sqrt{16n - 14}}{n}$.  Thus, both the error and residual converge to zero as $n \to \infty$.  However, the ratio
\begin{equation}
	\frac{\|r\|_X}{\sqrt{\alpha}} = \frac{\|r\|_X}{\sqrt{\lambda_1}} = \frac{\sqrt{16n-14}}{2n\sin\left(\frac{\pi}{2(n+1)}\right)} > \frac{4\sqrt{n-1}}{\pi}
\end{equation}
is unbounded for large $n$.  The error $u - \hat{u}$ has no component in the span of the eigenvector of $A$ corresponding to $\lambda_1$.  Thus, reflecting on~\eqref{loose_bound}, the ratio of the residual to the square root of the coercivity constant is not an accurate predictor of the norm of the error.

As a consequence, our goal is to improve the error bound in~\eqref{loose_bound}.  We do so by computing an approximation to the error in a finite-dimensional subspace $Z^h \subset X$ (we do not exclude the possibility that $Z^h = X^h$ or $X^h \cap Z^h = \{0\}$), and denote this approximation by $\hat{e}_{\vmu}^h$.

We introduce the \textit{auxiliary} or \textit{error residual}
\begin{equation}\label{aux_res_definition}
	\rho_{\vmu}^h \coloneqq r_{\vmu}^h - \mathcal{L}_{\vmu}\hat{e}_{\vmu}^h.
\end{equation}
Analogous to the previous bound~\eqref{loose_bound}, with this form we arrive at
\begin{align}\label{auxiliary_bound}
\begin{split}
  \alpha(\vmu)\|e_{\vmu}^h - \hat{e}_{\vmu}^h\|_X^2 & \leq \left\|\mathcal{L}_{\vmu}\left(e_{\vmu}^h - \hat{e}_{\vmu}^h\right)\right\|_Y^2 = \|r_{\vmu}^h - \mathcal{L}_{\vmu}\hat{e}_{\vmu}^h\|_Y^2 = \|\rho_{\vmu}^h\|_Y^2,\\
  \Rightarrow \|e_{\vmu}^h - \hat{e}_{\vmu}^h\|_X & \leq \frac{\|\rho_{\vmu}^h\|_Y}{\sqrt{\alpha(\vmu)}}.
\end{split}
\end{align}
In the case that the approximation to the error is simply taken to be $\hat{e}^h_{\vmu} = 0$, then $\|\rho_{\vmu}^h\|_Y = \|r_{\vmu}^h\|_Y$.  However, if a reasonable approximation to the error is computed, it is often the case that $\|\rho_{\vmu}^h\|_Y \ll \|r_{\vmu}^h\|_Y$, resulting is less sensitivity to a small coercivity constant.

We use~\eqref{auxiliary_bound} and the triangle inequality to develop an alternative upper bound for $\|e_{\vmu}^h\|_X$:
\begin{equation}\label{tight_upper_bound}
	\|e_{\vmu}^h\|_X \leq \|\hat{e}_{\vmu}^h\|_X + \|e_{\vmu}^h - \hat{e}_{\vmu}^h\|_X \leq \|\hat{e}_{\vmu}^h\|_X + \frac{\|\rho_{\vmu}^h\|_Y}{\sqrt{\alpha(\vmu)}} \eqqcolon M^h(\vmu).
\end{equation}
With this form of the error bound we monitor its effectiveness with the so-called \textit{effectivity} ratio, defined as
\begin{equation}
  \frac{M^h(\vmu)}{\|e_{\vmu}^h\|_X},
\end{equation}
which we seek as close to one as possible.
The effectivity ratio is bounded in the following, which is adapted from~\cite{schmidt2020rigorous}:
\begin{theorem}\label{prop1}
  Fix $\delta \in \left[0,1\right)$ and $\vmu \in \mathcal{D}$.
  Let $u_{\vmu}$ be the solution to~\eqref{strong_form}, and let $u_{\vmu}^h$ be its discrete approximation in $X^h$, with residual $r_{\vmu}^h$ as defined in~\eqref{residual_definition}.  Denote the error as $e_{\vmu}^h$ (cf.~\eqref{error_definition}) and consider $\hat{e}_{\vmu}^h$ to be any approximation of this error.  Finally, let $\rho_{\vmu}^h$ denote the auxiliary residual (cf.~\eqref{aux_res_definition}).  If
\begin{equation}\label{stopping_criteria}
	\frac{\|\rho_{\vmu}^h\|_Y}{\sqrt{\alpha(\vmu)}\,\|\hat{e}_{\vmu}^h\|_X} \leq \delta,
\end{equation}
then the effectivity satisfies the following bound:
\begin{equation}
	\frac{M^h(\vmu)}{\|e_{\vmu}^h\|_X} \leq \frac{1 + \delta}{1 - \delta}
\end{equation}
\end{theorem}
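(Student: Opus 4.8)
The plan is to control the numerator and denominator of the effectivity ratio separately, each reduced to the single consequence of the stopping criterion~\eqref{stopping_criteria}, namely $\|\rho_{\vmu}^h\|_Y/\sqrt{\alpha(\vmu)} \le \delta\,\|\hat{e}_{\vmu}^h\|_X$. First I would abbreviate $\eta \coloneqq \|\rho_{\vmu}^h\|_Y/\sqrt{\alpha(\vmu)}$, so that~\eqref{stopping_criteria} reads $\eta \le \delta\,\|\hat{e}_{\vmu}^h\|_X$, and recall from~\eqref{tight_upper_bound} that $M^h(\vmu) = \|\hat{e}_{\vmu}^h\|_X + \eta$. The whole argument then rests on the estimate $\|e_{\vmu}^h - \hat{e}_{\vmu}^h\|_X \le \eta$ already supplied by the auxiliary bound~\eqref{auxiliary_bound}.

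For the numerator, substituting the stopping criterion directly gives
\[
  M^h(\vmu) = \|\hat{e}_{\vmu}^h\|_X + \eta \le \|\hat{e}_{\vmu}^h\|_X + \delta\,\|\hat{e}_{\vmu}^h\|_X = (1+\delta)\,\|\hat{e}_{\vmu}^h\|_X.
\]
For the denominator, I would turn the discrepancy bound~\eqref{auxiliary_bound} into a lower bound on $\|e_{\vmu}^h\|_X$ via the reverse triangle inequality:
\[
  \|e_{\vmu}^h\|_X \ge \|\hat{e}_{\vmu}^h\|_X - \|e_{\vmu}^h - \hat{e}_{\vmu}^h\|_X \ge \|\hat{e}_{\vmu}^h\|_X - \eta \ge (1-\delta)\,\|\hat{e}_{\vmu}^h\|_X,
\]
where the last inequality again applies~\eqref{stopping_criteria}. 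Dividing the two displays and cancelling the common factor $\|\hat{e}_{\vmu}^h\|_X$ produces the claimed bound $(1+\delta)/(1-\delta)$.

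The step I expect to carry the most weight is the denominator estimate: the key realization is that~\eqref{auxiliary_bound} is precisely the ingredient needed to feed the reverse triangle inequality, and that the same quantity $\eta$ governed by~\eqref{stopping_criteria} simultaneously enlarges the numerator and shrinks the denominator. I would also flag two small points that the hypotheses quietly guarantee. Because $\delta \in [0,1)$, the factor $1-\delta$ is strictly positive, so the final division is legitimate and the lower bound on $\|e_{\vmu}^h\|_X$ is nontrivial. Moreover, the stopping criterion~\eqref{stopping_criteria} is only meaningful when $\|\hat{e}_{\vmu}^h\|_X \ne 0$, which is exactly what permits cancelling that factor; the degenerate case $\hat{e}_{\vmu}^h = 0$ is implicitly excluded, since the left-hand side of~\eqref{stopping_criteria} would otherwise be undefined.
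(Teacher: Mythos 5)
Your proof is correct, and it is a streamlined variant of the paper's argument rather than an identical one. Both proofs hinge on the same central estimate, the lower bound $(1-\delta)\|\hat{e}_{\vmu}^h\|_X \leq \|e_{\vmu}^h\|_X$; the paper derives it via the reverse triangle inequality followed by a two-case analysis, whereas you obtain it in one line from $\|e_{\vmu}^h\|_X \geq \|\hat{e}_{\vmu}^h\|_X - \|e_{\vmu}^h - \hat{e}_{\vmu}^h\|_X$. After that, the routes diverge: the paper re-expresses $M^h(\vmu)$ in terms of $\|e_{\vmu}^h\|_X$ via the triangle inequality, which forces it to prove an extra intermediate inequality, $\|\hat{e}_{\vmu}^h - e_{\vmu}^h\|_X/\|e_{\vmu}^h\|_X \leq \delta/(1-\delta)$ (its equation~\eqref{delta_inequality}), and then collect three terms that sum to $(1+\delta)/(1-\delta)$. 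You instead bound numerator and denominator separately against the common yardstick $\|\hat{e}_{\vmu}^h\|_X$: the trivial estimate $M^h(\vmu) \leq (1+\delta)\|\hat{e}_{\vmu}^h\|_X$ and the lower bound above, then divide. This buys a shorter proof that avoids the case split and the auxiliary ratio bound entirely; what the paper's longer route provides as a by-product is the inequality~\eqref{delta_inequality} itself, which quantifies the relative accuracy of $\hat{e}_{\vmu}^h$ as an approximation to the true error and may be of independent interest. Your closing remarks are also sound: $\delta < 1$ makes the division legitimate, the hypothesis~\eqref{stopping_criteria} implicitly forces $\|\hat{e}_{\vmu}^h\|_X \neq 0$, and your denominator bound then guarantees $\|e_{\vmu}^h\|_X > 0$, so the effectivity ratio is well defined.
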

\begin{proof}
Assume~\eqref{stopping_criteria} holds. By the reverse triangle inequality,~\eqref{auxiliary_bound}, and~\eqref{stopping_criteria}, we have
\begin{equation}\label{first_bound}
	\left|\frac{\|e_{\vmu}^h\|_X - \|\hat{e}_{\vmu}^h\|_X}{\|\hat{e}_{\vmu}^h\|_X}\right| \leq \frac{\|\hat{e}_{\vmu}^h - e_{\vmu}^h\|_X}{\|\hat{e}_{\vmu}^h\|_X} \leq \frac{\|\rho_{\vmu}^h\|_Y}{\sqrt{\alpha(\vmu)}\,\|\hat{e}_{\vmu}^h\|_X} \leq \delta.
\end{equation}

If $\|\hat{e}_{\vmu}^h\|_X > \|e_{\vmu}^h\|_X$, then it follows from~\eqref{first_bound} that
\begin{equation}
	\|\hat{e}_{\vmu}^h\|_X - \|e_{\vmu}^h\|_X \leq \delta\|\hat{e}_{\vmu}^h\|_X \implies (1-\delta)\|\hat{e}_{\vmu}^h\|_X \leq \|e_{\vmu}^h\|_X.
\end{equation}
If $\|\hat{e}_{\vmu}^h\|_X \leq \|e_{\vmu}^h\|_X$, then $(1-\delta)\|\hat{e}_{\vmu}^h\|_X \leq \|e_{\vmu}^h\|_X$ follows immediately since $\delta \geq 0$.  In either case,
\begin{equation}\label{e_hat_bound}
	(1-\delta)\|\hat{e}_{\vmu}^h\|_X \leq \|e_{\vmu}^h\|_X,
\end{equation}
holds.
Using~\eqref{e_hat_bound},~\eqref{auxiliary_bound}, and~\eqref{stopping_criteria}, it follows that
\begin{equation}\label{delta_inequality}
	\frac{\|\hat{e}_{\vmu}^h - e_{\vmu}^h\|_X}{\|e_{\vmu}^h\|_X} \leq \frac{\|\hat{e}_{\vmu}^h - e_{\vmu}^h\|_X}{(1-\delta)\|\hat{e}_{\vmu}^h\|_X} \leq \frac{\|\rho_{\vmu}^h\|_Y}{\sqrt{\alpha(\vmu)}(1-\delta)\|\hat{e}_{\vmu}^h\|_X} \leq \frac{\delta}{1 - \delta}.
\end{equation}
Finally, using the triangle inequality,~\eqref{stopping_criteria},~\eqref{e_hat_bound}, and~\eqref{delta_inequality}, we have
\begin{align}
\begin{split}
	M^h(\vmu) = \|\hat{e}_{\vmu}^h\|_X + \frac{\|\rho_{\vmu}^h\|_Y}{\sqrt{\alpha(\vmu)}} &\leq \|e_{\vmu}^h\|_X + \|\hat{e}_{\vmu}^h - e_{\vmu}^h\|_X + \frac{\|\rho_{\vmu}^h\|_Y}{\sqrt{\alpha(\vmu)}}\\
	 &\leq \|e_{\vmu}^h\|_X + \|\hat{e}_{\vmu}^h - e_{\vmu}^h\|_X + \delta \|\hat{e}_{\vmu}^h\|\\
	 &= \left( 1 + \frac{\|\hat{e}_{\vmu}^h - e_{\vmu}^h\|_X}{\|e_{\vmu}^h\|_X} + \delta\frac{\|\hat{e}_{\vmu}^h\|_X}{\|e_{\vmu}^h\|_X}\right)\|e_{\vmu}^h\|_X\\
	 &\leq \left(1 + \frac{\delta}{1 - \delta} + \frac{\delta}{1 - \delta}\right) \|e_{\vmu}^h\|_X\\
	 &= \left(\frac{1 + \delta}{1 - \delta}\right)\|e_{\vmu}^h\|_X,
	\end{split}
\end{align}
which completes the proof.
\end{proof}

\subsection{Application to Poisson's equation: Galerkin setting}\label{sec:appliedtopoisson}

Bound~\eqref{tight_upper_bound} is only useful if the inner products associated with the spaces $X$ and $Y$ are computable, and if there are easily constructed conforming subspaces $X^h,\,Z^h\subset X$.  We will demonstrate possible consequences by considering an example of the parameter-independent Poisson's equation with homogeneous boundary conditions:
\begin{align}\label{Poisson}
\begin{split}
	-\Delta u &= f, \hspace{7mm} \vx \in \Omega,\\
	u &= 0, \hspace{7mm} \vx \in \partial\Omega,
	\end{split}
\end{align}
with $\mathcal{L} \coloneqq -\Delta$.  From here, we have a number of different choices for the domain and range. One option is $X = H^2(\Omega)\cap H^1_0(\Omega)$, and $Y = L^2(\Omega)$; both norms are easily computable.  However, to compute discrete approximations $u^h$ and $\hat{e}^h$, we must construct finite element spaces $X^h$ and $Z^h$ that contain functions that are class $C^1(\Omega)$ across element boundaries, which are difficult to construct~\cite{boffi2013mixed}, and are not usually used in the numerical approximation of Poisson's equation.

Alternatively, consider $X = H_0^1(\Omega)$ and $Y = H^{-1}(\Omega) = \left(H_0^1(\Omega)\right)'$.  In this setting, the Poisson equation is often solved using variational methods, resulting in the Galerkin weak form of the equation:
\begin{equation}
	a(u,v) \coloneqq \int_\Omega \nabla u\cdot\nabla v\ dx = \int_\Omega fv\ dx \eqqcolon F(v), \hspace{6mm} \forall v \in H^1_0(\Omega).
\end{equation}
Using the language of duality pairings (see for example~\cite{braess2007finite}), it is possible to express this through a mapping $\mathcal{L}:X \to Y$, with $X = H^1_0(\Omega)$ and $Y = H^{-1}(\Omega)$, via
\begin{equation}\label{duality_operator}
	\mathcal{L}u\left[\cdot\right] \coloneqq \int_{\Omega}\nabla u \cdot \nabla\left[\cdot\right] \dif x.
\end{equation}

That is, $\mathcal{L}u = F \in H^{-1}(\Omega)$.  Standard conforming finite element spaces are readily constructed for $X$, and the norm on $X$ is easily computable.  However, the $Y = H^{-1}(\Omega)$ norm requires inversion of the Laplacian operator~\cite{bochev2009least}:
\begin{equation}
	 \| F\|_Y = \left((-\Delta)^{-1/2}F,(-\Delta)^{-1/2}F\right)_{0}^{1/2}.
\end{equation}
Consequently, to compute the $Y$-norm of the auxiliary residual $\rho_{\vmu}^h = f_{\vmu} + \Delta u_{\vmu}^h + \Delta \hat{e}_{\vmu}^h$, we would need to compute $(-\Delta)^{-1}f$, which is exactly the equation for which we seek an error bound.

In the finite element setting, the infinite dimensional space $H_0^1(\Omega)$ is not dealt with directly, but instead a finite dimensional \textit{test} subspace $V^h \subset H_0^1(\Omega)$ is introduced. The restriction of $F$ to the subspace $V^h$ is a bounded linear functional on $V^h$, so that $F$ is identified with an element in $\left(V^h\right)'$.  Thus, $\mathcal{L}u = F \in \left(V^h\right)'$, allowing us to associate $Y$ with $\left(V^h\right)'$.  While the norm for $Y=\left(V^h\right)'$ is more complex than either the $L^2$ or $H^1$ norms, it is still computable due to its finite dimension~\cite{rozza2007reduced}.

Unfortunately, the operator fails to be coercive in this case, which is seen either by using the fact that $X$ is infinite dimensional and $Y$ is finite dimensional, or by observing the standard Galerkin orthogonality condition:
\begin{equation}
	a(u-u^h,v^h) = 0,\hspace{6mm} \forall v^h \in V^h.
\end{equation}
This implies that
\begin{equation}
	\mathcal{L}(u - u^h) = 0 \in \left(V^h\right)'.
\end{equation}
That is, even if $u - u^h \neq 0$, the image $\mathcal{L}(u - u^h)$ is zero when considered as an element of the space $\left(V^h\right)'$.

Defining a finite-dimensional \textit{trial} subspace $W^h \subset H_0^1(\Omega)$ (where $W^h$ coincides with $V^h$ in the standard Galerkin method), standard ellipticity results~\cite{braess2007finite} show that $\mathcal{L}$ in~\eqref{duality_operator} is coercive on $X = W^h$.
However, the exact solution to~\eqref{strong_form} does not belong to $W^h$ in general.  For this reason, reduced basis approximations for standard Galerkin methods typically consider the ``true'' solution as a discrete solution in the finite dimensional subspace $W^h \subset X$.
As a result, it is not possible to apply the error bounds to an exact solution $u \notin W^h$ in the Ritz-Galerkin finite element setting.
In the next section, we show that this problem does not arise in a LSFEM context.

\section{The Least-Squares Finite Element Method}\label{sec:ls}

The least-squares finite element method (LSFEM) reformulates the PDE as a system of first-order equations and then defines the solution as the minimizer of a functional in an appropriate norm. See~\cite{bochev2009least,jiang2013least} for a complete description; a brief overview, with application to parametrized equations, is presented in this section.

\subsection{Abstract Formulation}

In addition to the assumptions of the previous section, we consider $\mathcal{L}_{\vmu}$ to be a bounded, linear first-order differential operator.
We wish to solve~\eqref{strong_form}.  Under the assumptions given by~\eqref{energy_balance}, any solution to~\eqref{strong_form} is the unique minimizer of the following problem:
\begin{equation}\label{CLSP}
	\argmin_{v\in X} J_{\vmu}(v;f_{\vmu}) \coloneqq \|\mathcal{L}_{\vmu}v - f_{\vmu}\|_Y^2.
\end{equation}
Conversely,~\eqref{CLSP} is guaranteed to have a unique minimizer $u_{\vmu} \in X$, and if $f_{\vmu}$ belongs to the range of $\mathcal{L}_{\vmu}$, this minimizer also solves~\eqref{strong_form}.  $u_{\vmu}$ necessarily satisfies the first-order optimality condition:
\begin{equation}\label{weak_form}
	(\mathcal{L}_{\vmu}u_{\vmu},\mathcal{L}_{\vmu}v)_Y = (f_{\vmu},\mathcal{L}_{\vmu}v)_Y, \hspace{6mm} \forall v \in X.
\end{equation}
For the remainder of the paper, we denote $u_{\vmu}$ as the unique solution to~\eqref{CLSP} and~\eqref{weak_form}; i.e., $\mathcal{L}_{\vmu}$ is surjective.

A LSFEM is defined by choosing a finite element subspace $X^h\subset X$, and seeking the minimum to~\eqref{CLSP} over this subspace instead.  The first-order optimality condition is now: find $u_{\vmu}^h \in X^h$ such that
\begin{equation}\label{discrete_weak}
	(\mathcal{L}_{\vmu}u_{\vmu}^h,\mathcal{L}_{\vmu}v^h)_Y = (f_{\vmu},\mathcal{L}_{\vmu}v^h)_Y, \hspace{6mm} \forall v^h \in X^h.
\end{equation}
Since $X^h\subset X$, coercivity of the bilinear form $a(\cdot,\cdot;\vmu) \coloneqq (\mathcal{L}_{\vmu}u_{\vmu}^h,\mathcal{L}_{\vmu}v^h)_Y$ and continuity of $F(\cdot) \coloneqq (f_{\vmu},\mathcal{L}_{\vmu}v^h)_Y$ on $X^h$ follow immediately.  Thus,~\eqref{discrete_weak} admits a unique solution $u_{\vmu}^h \in X^h$.\par 
The corresponding linear system of algebraic equations
\begin{equation}\label{eq:linear_system}
	A^h_{\vmu}\vu^h_{\vmu} = \vb_{\vmu}^h
\end{equation}
that is solved for the unknown vector of degrees of freedom $\vu^h_{\vmu}$, is also symmetric positive-definite.  This follows from the symmetry and coercivity of $a(\cdot,\cdot;\vmu)$.

\subsection{Error Approximation}

To approximate the error $e_{\vmu}^h = u_{\vmu} - u_{\vmu}^h$ for the LSFEM,  first an approximation $u_{\vmu}^h \in X^h \subset X$ is computed via~\eqref{discrete_weak}, and then the residual $r_{\vmu}^h = f_{\vmu} - \mathcal{L}_{\vmu}u^h_{\vmu}$ is computed.  Because of the form of~\eqref{discrete_weak}, the corresponding error satisfies
\begin{equation}
	(\mathcal{L}_{\vmu}e_{\vmu}^h,\mathcal{L}_{\vmu}v^h)_Y = 0, \hspace{6mm} \forall v^h \in X^h.
\end{equation}
As a result, if we attempt to compute an approximate error $\hat{e}_{\vmu}^h \in X^h$, we obtain zero.  To alleviate this, we introduce an additional subspace $Z^h$ that satisfies $X^h\subset Z^h \subset X$.  In the context of a finite element method, $Z^h$ may represent a refinement of the mesh, an increase in the polynomial order of the elements, or both.  We then solve for an approximation $Z^h \ni \hat{e}_{\vmu}^h \approx e_{\vmu}^h$ through the variational problem:
\begin{equation}
	(\mathcal{L}_{\vmu}\hat{e}_{\vmu}^h,\mathcal{L}_{\vmu}w^h)_Y = (r_{\vmu}^h,\mathcal{L}_{\vmu}w^h)_Y, \hspace{6mm} \forall w^h \in Z^h.
\end{equation}
Given the refinement of the space with $X^h \subset Z^h$, the auxiliary residual is expected to satisfy $\|\rho_{\vmu}^h\|_Y \ll \|r_{\vmu}^h\|_Y$. Thus, the rigorous error bound $M^h(\vmu)$ is applicable, and if the hypotheses of Theorem~\ref{prop1} hold, then the bounds on the effectivity are computable as well.\par 

It is the first-order formulation of the PDE that allows us to extend the theory  from \S~\ref{sec:parameqs} to form a practical method.  Any first-order formulation that leads to a practical LSFEM will lead to a space $X$ that is approximated by easily constructed finite element spaces, and a space $Y$ with an easily computable inner product. This leads to practical and computable norms $\|  \cdot \|_X$ and $\| \cdot \|_Y$.  We demonstrate this by continuing the example of the Poisson equation from \S~\ref{sec:appliedtopoisson}.  An equivalent first-order system of PDEs is given by
 \begin{align}\label{eq:ls_poisson}
\begin{split}
	\vq + \nabla u &= 0, \hspace{7mm} \vx \in \Omega,\\
	\nabla\cdot \vq &= 0, \hspace{7mm} \vx \in \Omega,\\
	u &= 0, \hspace{7mm} \vx \in \partial\Omega.
	\end{split}
\end{align}
The corresponding first-order differential operator is
\begin{align}\label{eq:poisson_first_order}
	\begin{split}
	    &\mathcal{L}: H(\text{div})\times H^1(\Omega) \to \left[L^2(\Omega)\right]^d \times L^2(\Omega),\\
		&\mathcal{L}\left[(\vq,u)\right] \coloneqq \begin{pmatrix}\vq + \nabla u\\
		\nabla\cdot \vq
 \end{pmatrix}.	
	\end{split}
\end{align}
The operator $\mathcal{L}$ satisfies \eqref{energy_balance} on the Hilbert spaces $X = H(\text{div})\times H^1$, $Y = \left(L^2\right)^d \times L^2$ \cite{pehlivanov1994least}, so the theory of \S~\ref{sec:parameqs} applies.  The norm of $Y = \left(L^2\right)^d \times L^2$ is easily computable;  moreover, simple conforming finite element spaces exist for $X = H(\text{div})\times H^1$~\cite{boffi2013mixed,raviart1977primal}.  Thus, the computational difficulties associated with the pairings $X = H_0^1$, $Y = H^{-1}$ and $X = H^2\cap H_0^1$, $Y=L^2$ from \S~\ref{sec:appliedtopoisson} are not present.
 
Furthermore, the LSFEM method minimizes the norm of the auxiliary residual $\|\rho_{\vmu}^h\|_Y$ by design.  This is a desirable property in light of the discussion in \S~\ref{sec:errorbounds}.

\section{Reduced Basis Methods}\label{sec:rbm}
In this section, we provide a brief overview of reduced basis (RB) methods for elliptic equations.  See~\cite{rozza2007reduced} for an extensive overview.
As described in \eqref{eq:linear_system}, the LSFEM discretization of a linear elliptic PDE leads to a parameter-dependent system of linear equations.  A Galerkin finite element method will also lead to a system of linear equations of the same form.  Thus, the algebraic considerations of reduced basis methods carry over from Galerkin methods to LSFEMs in a straightforward way.\par 

While we restrict our attention to steady-state problems, LSFEMs have also been successfully applied to time-dependent parabolic problems \cite{bochev2009least, tang1993least, yang1999some}.  Thus, for these two classes of problems, standard projection-based reduced order modeling approaches (e.g., Galerkin and Petrov-Galerkin \cite{bui2008model, carlberg2017galerkin, carlberg2011efficient}) can be applied.  Hyperbolic problems have proved to be more challenging for LSFEMs, although work has been done in this area \cite{bochev2001improved, de2004least,2003_Olson_thesis}.

\subsection{Galerkin Projection}

A parametrized elliptic PDE solved by a Galerkin variational method (e.g., a finite element method), leads to the equation:
\begin{equation}\label{bilinear_form}
	a(u^h_{\vmu}, v^h;\vmu) = F(v^h;\vmu), \hspace{6mm} \forall v^h \in X^h.
\end{equation}
Here, $a(\cdot,\cdot;\vmu):X \times X \to \mathbb{R}$ is a continuous and coercive bilinear form for all $\vmu \in \mathcal{D}\subset \mathbb{R}^d$, and $F(\cdot;\vmu):X \to \mathbb{R}$ is a bounded linear functional for all $\vmu$.

Let $N^h \coloneqq \text{dim}(X^h)$ and consider $\{\eta_j\}_{j=1}^{N^h}$ to be a basis for $X^h$. For any $\vmu$, the discrete solution has a representation $u^h_{\vmu} = \sum_{j=1}^{N^h}u_j(\vmu)\eta_j$, where $u_j(\vmu)$ denotes the coefficient to basis function $\eta_j$ and depending on $\vmu$.  Substitution into~\eqref{bilinear_form}, results in a linear system of the form
\begin{equation}\label{bilinear_form_algebraic}
	\sum_{j=1}^{N^h}a(\eta_j,\eta_i;\vmu)u_j(\vmu) = F(\eta_i;\vmu) \hspace{6mm} i=1,\dots,N^h.
\end{equation}
In a many query or real-time context,~\eqref{bilinear_form_algebraic} must be solved repeatedly or very quickly.  Even more, a large linear system must be assembled for \textit{each} parameter instance, which is prohibitively expensive for discretizations with many degrees of freedom.  Reduced basis methods are intended to help alleviate this cost.  By introducing a subspace $X^N\subset X$ with dimension $N \ll N^h$ and basis $\{\xi_j\}_{j=1}^N$, a reduced solution $u^N_{\vmu} = \sum_{j=1}^{N}c_j(\vmu)\xi_j$ is sought instead.  This leads to the much smaller linear system
\begin{equation}\label{bilinear_form_reduced}
	\sum_{j=1}^{N}a(\xi_j,\xi_i;\vmu)c_j(\vmu) = F(\xi_i;\vmu) \hspace{6mm} i=1,\dots,N.
\end{equation}

There are a number of features that distinguish a RB method.  First, an RB method must specify \textit{how} the reduced basis $\{\xi_j\}$ is constructed, which is part of the ``offline'' stage.
This ``offline-online'' decomposition is found throughout the reduced basis literature~\cite{quarteroni2015reduced, rozza2007reduced}.  Typically, the basis functions are linear combinations of the high-fidelity basis functions $\eta_j$.  We review the greedy sampling strategy for constructing the reduced basis in \S~\ref{sec:greedy}.

Second, a RB method requires the construction of an efficient error indicator $\widetilde{M}^N(\vmu)$ that quantifies the quality of the RB solution $u^N_{\vmu}$ in some manner.  This is used both to assess the quality of the computed RB solution in the online stage, and to guide the construction of the reduced basis when using a greedy sampling strategy in the offline stage. We review the standard error indicator used in reduced basis literature in \S~\ref{sec:rb_standard_err_ind} and discuss our improved error indicator for the least-squares  reduced basis method in \S~\ref{sec:lsrbm}.

Finally, a RB method is distinguished by the handling of the resulting reduced system~\eqref{bilinear_form_reduced}, which still requires considerable cost in the assembly process, despite the reduction, because each new value of $\vmu$ requires a new linear system and right-hand side.  The cost of this assembly is, in general, dependent on the dimension $N^h$, which is unacceptable for the many-query or real-time context.  Either additional assumptions on $a(\cdot,\cdot;\vmu)$ and $F(\cdot;\vmu)$ must be made, or an algorithm to remove this $N^h$ dependency must be specified. This issue is addressed by considering affinely parametrized equations.

\subsection{Affinely Parametrized Equations}\label{sec:affine_param}

A critical feature of an effective RB method is that the assembly of~\eqref{bilinear_form_reduced} should be independent of the dimension of the full-order problem $N^h$ to be useful in a many-query or real-time context.
A certain class of variational problems exist where an $N^h$-independent assembly process is readily obtained.
A variational problem is said to be \textit{affinely parametrized} if it can be expressed in the form
\begin{align}
	\begin{split}
		a(u,v;\vmu) &= \sum_{k=1}^{Q_a}\theta_k^a(\vmu)a_k(u,v),\\
		F(v;\vmu) &= \sum_{k=1}^{Q_F}\theta_k^F(\vmu)F_k(v).
	\end{split}
\end{align}
Here, $\{\theta_k^a\}_{k=1}^{Q_a}$ and $\{\theta_k^F\}_{k=1}^{Q_F}$ are a set of $Q_a$ (respectively $Q_F$) scalar functions of $\vmu$, the $\{a_k(u,v)\}_{k=1}^{Q_a}$ are continuous, parameter-independent, bilinear forms, and the $\{F_k\}_{k=1}^{Q_F}$ are continuous, parameter-independent, linear functionals.  When this is satisfied, equation~\eqref{bilinear_form_reduced} takes the form:
\begin{equation}
	\sum_{j=1}^N\left(\sum_{k=1}^{Q_a}\theta_k^a(\vmu)a_k(\xi_j,\xi_i)\right)c_j(\vmu) = \sum_{\ell = 1}^{Q_F}\theta_\ell^F(\vmu)F_\ell(\xi_i)\hspace{6mm} i = 1,2,\dots,N.
\end{equation}
That is, the system matrix and right hand side are simply linear combinations of the matrices and vectors
\begin{align}\label{reduced_system}
	\begin{split}
		\left(A_k\right)_{ij}&\coloneqq a_k(\xi_j,\xi_i)\\
		\left(\vb_k\right)_i &\coloneqq F_k(\xi_i).
	\end{split}
\end{align}
These are assembled in the offline stage, leading to an online stage that is independent of the problem size $N_h$.
While there there are RB methods that do not satisfy this property~---~e.g.,\ using the empirical interpolation method~\cite{barrault2004empirical}~---~the work here is restricted to affinely parametrized problems as in a host of other works~\cite{deparis2009, dihlmann2015, grepl2005,HB2013,huynh2007reduced,maday2002,sen2008,yano2014}.

The requirement for affinely parametrized equations is no more restrictive for the least-squares method than it is for the Galerkin case.
An example is in the case of the time-harmonic Maxwell's equation for the calculation of the electric field, $\vE$~\cite{HB2013}.  Let $\vJ$ be a known source term, $\mu$ the permeability, $\sigma$ the conductivity, $\epsilon$ the permittivity, $\omega$ the frequency, and $\beta = i\omega\sigma - \omega^2\epsilon$, where $i = \sqrt{-1}$.  The vector of parameters is thus $\vmu = (\mu,\sigma,\epsilon,\omega)$.  Introducing a test function $\vv$, the variational equation becomes
\begin{align}
\frac{1}{\mu}(\nabla \times \vE, \nabla \times \vv)_0 + \beta(\vE,\vv)_0 = i\omega(\vJ,\vv)_0,\hspace{6mm} \forall\vv \in H(\text{curl}).
\end{align}
where $(\cdot, \cdot)_0 $ is the $L^2(\Omega)$ inner-product for vector  valued functions.
The resulting weak equation is affinely parametrized.

A least-squares discretization is be obtained by introducing the variable $\vq = \mu^{-1}\nabla \times \vE$.  Introducing test functions $\vr$ and $\vv$, one obtains the weak formulation
\begin{align}
\begin{split}
&\left[(\nabla \times \vq,\nabla \times \vr)_0 + (\vq,\vr)_0\right] + \beta\left[(\nabla\times\vq,\vv)_0 + (\vE,\nabla\times\vr)_0\right] + \beta^2(\vE,\vv)_0\\
&-\frac{1}{\mu}\left[(\vq,\nabla\times\vv)_0 + (\nabla\times\vE,\vr)_0\right] + \frac{1}{\mu^2}(\nabla\times\vE,\nabla\times\vv)_0\\
&= i\omega(\vJ,\nabla\times\vr)_0 + i\beta\omega(\vJ,\vv)_0.
\end{split}
\end{align}
We see that the least-squares discretization also leads to an affinely parametrized variational equation.
\subsection{Error Indicator}\label{sec:rb_standard_err_ind}

For a reduced basis of dimension $N$ and for every $\vmu$, there is a corresponding RB solution $u_{\vmu}^N$ and a corresponding weak residual $R^N(\cdot;\vmu) \in \left(X^h\right)'$ defined as
\begin{equation}\label{dual_residual}
	R^N(v^h;\vmu) \coloneqq F(v^h;\vmu) - a(u^N_{\vmu},v^h;\vmu),\hspace{7mm} \forall v^h \in X^h.
\end{equation}
Reduced basis methods typically construct error indicators of the form
\begin{equation}\label{standard_indicator}
	\widetilde{M}^N(\vmu) \coloneqq \frac{\|R^N(\cdot;\vmu)\|_{\left(X^h\right)'}}{\beta_{\vmu}^{\text{LB}}},
\end{equation}
where $\beta_{\vmu}^{\text{LB}}$ is a lower bound of a coercivity or stability constant,  which is computed via the Successive Constraint Method (SCM)~\cite{chen2008monotonic,chen2009improved, huynh2010natural, huynh2007reduced,rozza2007reduced, rozza2013reduced,sen2006natural}.
SCM constructs a linear program of complexity independent of the problem size in the offline stage, similar to the construction of the reduced basis itself.

The indicator in~\eqref{standard_indicator} is an analogous quantity to
\begin{equation}
	\frac{\|\rho_{\vmu}^h\|_Y}{\sqrt{\alpha(\vmu)}}.
\end{equation}
In~\cite{schmidt2020rigorous}, the error indicator was improved upon by introducing an auxiliary error residual as in \S~\ref{sec:parameqs}.  However, as shown in \S~\ref{sec:appliedtopoisson}, an indicator based on the residual in~\eqref{dual_residual}  cannot be applied to the error with respect to an arbitrary function in $H^1$.  We refer to~\cite{quarteroni2015reduced} for a detailed explanation on the construction of $R^N$ and its corresponding dual norm.

\subsection{Offline and Online Stages using a Greedy Sampling Strategy}\label{sec:greedy}

The task of the offline stage in the reduced basis method is to construct the actual basis $\{\xi_i\}$.  A finite subset $\mathcal{D}_{\text{train}} \subset \mathcal{D}$ is chosen to represent the space of possible parameter values.  A parameter vector $\vmu_1 \in \mathcal{D}_{\text{train}}$ is chosen arbitrarily.

Define $\tilde{\xi}_1 \in X^h$ to be the solution of
\begin{equation}
	a(\tilde{\xi_1},v^h;\vmu_1) = F(v^h;\vmu_1), \hspace{5mm} \forall v^h\in X^h.
\end{equation}
Then the first reduced basis function is
\begin{equation}
	\xi_1 = \frac{\tilde{\xi_1}}{\|\tilde{\xi_1}\|_X}.
\end{equation}

Suppose for $N \geq 1$, an orthonormal basis $\{\xi_1,\dots,\xi_N\}$ has been constructed corresponding to parameters $\vmu_1,\dots,\vmu_N \in \mathcal{D}_{\text{train}}$.  For each $\vmu \in \mathcal{D}_{\text{train}}\setminus \{\vmu_1,\dots,\vmu_N\}$, let $u^N_{\vmu}$ be the solution to the projected variational problem
\begin{equation}\label{projected_variational}
	a(u^N_{\vmu},\xi_i;\vmu) = F(\xi_i;\vmu),\hspace{6mm} i=1,\dots,N.
\end{equation}
Using the expression for $R^N \in (X^h)'$ given by~\eqref{dual_residual}, the next parameter value is chosen through
\begin{equation}
	\vmu_{N+1} = \argmax\limits_{\vmu \in \mathcal{D}_{\text{train}}\setminus \{\vmu_1,\dots,\vmu_N\}} \widetilde{M}^N(\vmu),
\end{equation}
where $\widetilde{M}^N(\vmu)$ is defined in~\eqref{standard_indicator}.
The next basis function $\xi_{N+1}$ is found after computing the full-order solution $u^h_{\vmu_{N+1}}$ to equation~\eqref{projected_variational}, and orthonormalizing against the existing basis functions in the appropriate inner product.

The algorithm terminates after either the dimension of the basis has reached an upper bound or $\widetilde{M}^N(\vmu)$ is smaller than a preset tolerance.  At this point, the matrices and vectors from~\eqref{reduced_system} are computed.

In the subsequent online stage, having constructed a basis $\{\xi_1,\dots,\xi_N\}$, a reduced-order solution is easily obtained by solving the $N\times N$ linear system corresponding to the projected variational problem.  The computational cost is thus independent of the dimension of $X^h$, an essential component of a computationally efficient online stage.

\section{A Least-Squares Finite Element Method with Reduced Basis}\label{sec:lsrbm}  We now develop a least-squares based reduced basis method, which we label LSFEM-RB\@.
First, recall the \textit{improved} error estimate
\begin{equation}\label{eq:err_est}
	\|e_{\vmu}^h\|_X \leq \|\hat{e}_{\vmu}^h\|_X + \frac{\|\rho_{\vmu}^h\|_Y}{\sqrt{\alpha(\vmu)}} = M^h(\vmu),
\end{equation}
which is a rigorous upper bound for the error; its effectivity is also bounded by Theorem~\ref{prop1}.

Next, we make use of two finite-dimensional finite element spaces, $X^h \subset Z^h \subset X$, to compute the numerical approximation to the PDE and to the error equation.  To this end, we define
\begin{align}
	\begin{split}
		a(u,v;\vmu) &\coloneqq \left(\mathcal{L}_{\vmu}u,\mathcal{L}_{\vmu}v\right),\\
		F(v;\vmu) &\coloneqq \left(f_{\vmu},\mathcal{L}_{\vmu}v\right),\\
		R(w,u;\vmu) &\coloneqq F(w;\vmu) - a(u,w;\vmu).
	\end{split}
\end{align}

Recall the assumption of affine parametric dependence from \S~\ref{sec:affine_param},
\begin{align}\label{eq:affine_forms}
\begin{split}
a(u,v;\vmu) &= \sum_{k=1}^{Q_a}\theta_k^a(\vmu)a_k(u,v),\\
F(v;\vmu) &= \sum_{k=1}^{Q_f}\theta_k^F(\vmu)F_k(v),
	\end{split}
\end{align}
which is key for a computationally efficient online stage.
\par 

Finally, using the error estimate \eqref{eq:err_est} requires knowledge of the coercivity constant $\alpha(\vmu)$; replacing this by a lower bound $0 < \alpha_{\text{LB}}(\vmu) \leq \alpha(\vmu)$ also results in a rigorous upper bound for the error.  Computationally, we make use of a lower bound computed via the Successive Constraint Method \cite{chen2008monotonic}.  While this method guarantees a rigorous lower bound for the coercivity constant of a finite-dimensional subspace, it is still possible that it returns a value that is larger than the true coercivity constant $\alpha(\vmu)$.  This is addressed in more detail in \S~\ref{sec:coercivity}.
\paragraph{Remark} For the solution of the approximate error $\hat{e}_{\vmu}^h$, we use a space $Z^h$ that contains the original finite element space $X^h$; this is obtained through refinement of the mesh or increasing the polynomial order.  
As an alternative it is tempting to build the error approximation on a subspace $Z^h \subset X^h$; however, this will lead to the approximation $\hat{e}_{\vmu}^h = 0$ because of Galerkin orthogonality.  It is still possible to compute the error on a space $Z^h \not\subset X^h$,  with $Z^h$ having fewer degrees of freedom than $X^h$, as long as it is possible to transfer the solution $u_{\vmu}^h\in X^h$ onto the new grid.  However, if the grid corresponding to $Z^h$ is too coarse, the auxiliary residual $\|\rho_{\vmu}^h\|_Y$ will be too large, making the error estimate less effective.

\subsection{Offline Algorithm}
With an initial $\vmu_1 \in \mathcal{D}_{\text{train}}$ we compute the solution $u_{\vmu_1}^h$ to the equation
\begin{equation}\label{LS_primal}
	a(u_{\vmu_1}^h,v^h;\vmu_1) = F(v^h;\vmu_1), \hspace{7mm} v^h \in X^h.
\end{equation}
followed by the error approximation via the equation
\begin{equation}\label{LS_error}
a(\hat{e}^h_{\vmu_1},w^h;\vmu_1) = R(w^h,\tilde{\xi}_1;\vmu_1) \hspace{7mm} w^h\in Z^h.
\end{equation}
We then normalize $u_{\vmu_1}^h$ and $\hat{e}^h_{\vmu_1}$ to have unit $X$-norm and obtain our first pair of basis functions $\xi_1 \in X^h$ and $\phi_1 \in Z^h$.

Assume then that we we have constructed two orthonormal bases $\{\xi_1,\dots,\xi_N\} \subset X^h$ and $\{\phi_1,\dots,\phi_N\}\subset Z^h$ corresponding to parameters $\{\vmu_1,\dots,\vmu_N\}$.
For each $\vmu \in \mathcal{D}_{\text{train}}$, we compute the solution to the projected problem
\begin{equation}
	a(u^N_{\vmu},\xi_i;\vmu) = F(\xi_i;\vmu),\hspace{6mm} i = 1,\dots,N,
\end{equation}
and the corresponding projected error from
\begin{equation}
	a(\hat{e}^N_{\vmu},\phi_i;\vmu) = R(\phi_i,u^N_{\vmu};\vmu),\hspace{6mm} i = 1,\dots N.
\end{equation}
Defining the reduced residual by $r^N_{\vmu} = f_{\vmu} - \mathcal{L}_{\vmu}u^N_{\vmu}$, and the corresponding reduced auxiliary residual $\rho^N_{\vmu} = r^N_{\vmu} - \mathcal{L}_{\vmu}\hat{e}^N_{\vmu}$, the next parameter value is then selected through
\begin{equation}\label{eq:selection_criterion}
	\vmu_{N+1} = \argmax\limits_{\vmu \in \mathcal{D}_{\text{train}}\setminus \{\vmu_1,\dots,\vmu_N\}} M^N(\vmu) \coloneqq \|\hat{e}^N_{\vmu}\|_X + \frac{\|\rho^N_{\vmu}\|_Y}{\sqrt{\alpha_{\text{LB}}(\vmu)}}.
\end{equation}
Here, we denote $M^N(\vmu)$ as error estimate $M^h(\vmu)$ when restricted to approximations in the $N$-dimensional subspaces $\text{span}\{\xi_1,\dots,\xi_N\}$ and $\text{span}\{\phi_1,\dots,\phi_N\}$.  The basis functions $\xi_{N+1}$ and $\phi_{N+1}$ are obtained from the full-order solutions $u^h_{\vmu_{N+1}}$ and $\hat{e}^h_{\vmu_{N+1}}$ by orthonormalizing against the existing basis functions in the $X$-inner product.  

The algorithm terminates whenever
\begin{equation}\label{eq:stopping_criteria}
\frac{\|\rho^N_{\vmu}\|_Y}{\sqrt{\alpha(\vmu)}\|\hat{e}^N_{\vmu}\|_X} \leq \delta, \hspace{6mm} \forall \vmu \in \mathcal{D}_{\text{train}}.
\end{equation}
for a prescribed tolerance $\delta \in (0,1)$.  During the course of the offline algorithm, $\delta$ is increased if a full-order error estimate is encountered that exceeds the current value; if the full-order error indicator for a given $\vmu$ value is not bounded by $\delta$, then we cannot expect a reduced-order analogue to be bounded by this quantity either.  If $\delta$ is too large at the end of the algorithm, mesh or polynomial refinement of the space is needed to increase accuracy.\par 
Once the basis functions have been constructed, the reduced basis matrices with entries $a_k(\xi_i,\xi_j)$, $a_k(\phi_i,\phi_j)$, $a_k(\phi_i,\xi_j)$ and reduced basis vectors with entries $F_k(\xi_i)$ and $F_k(\phi_i)$ are assembled.  The algorithm for the offline stage is given in Algorithm~\ref{ls-algorithm}.

\begin{algorithm}
\caption{Least Squares Reduced Basis Offline Algorithm}
\label{ls-algorithm}
\begin{algorithmic}
\State Choose $\vmu_1 \in \mathcal{D}_{\text{train}}$
\State Compute full-order solutions $u^h_{\vmu_1}$ and $\hat{e}^h_{\vmu_1}$.\Comment{\eqref{LS_primal}, \eqref{LS_error}}
\State Normalize to obtain primal basis $\{\xi_1\}$, and error basis $\{\phi_1\}$.
\For {$n=1,\dots,N_{\text{max}}$}
	\If {$\frac{\|\rho^n_{\vmu}\|_Y}{\sqrt{\alpha_{\text{LB}}(\mu)}\|\hat{e}_{\vmu}^n\|_X} \leq \delta$ for all $\vmu \in \mathcal{D}_{\text{train}}$}\Comment{\eqref{eq:stopping_criteria}}
		\State \textbf{Break}
	\EndIf
	\State $\vmu_{n+1} = \argmax M^n(\vmu)$\Comment{\eqref{eq:selection_criterion}}
	\State Compute full-order solutions $u^h_{\vmu_{n+1}}$ and $\hat{e}^h_{\vmu_{n+1}}$.
	\State If full-order estimate $\frac{\|\rho^h_{\vmu_{n+1}}\|_Y}{\sqrt{\alpha_{\text{LB}}(\vmu_{n+1})}\|\hat{e}_{\vmu_{n+1}}^h\|_X} > \delta$, set $\delta = \frac{\|\rho^h_{\vmu_{n+1}}\|_Y}{\sqrt{\alpha_{\text{LB}}(\vmu_{n+1})}\|\hat{e}_{\vmu_{n+1}}^h\|_X}$.
	\State Orthonormalize $u^h_{\vmu_{n+1}}$ against $\{\xi_1,\dots,\xi_n\}$ to obtain $\xi_{n+1}$, and append.
	\State Orthonormalize $\hat{e}^h_{\vmu_{n+1}}$ against $\{\phi_1,\dots,\phi_n\}$ to obtain $\phi_{n+1}$, and append.
\EndFor
\State Assemble matrices $a_k(\xi_i,\xi_j)$, $a_k(\phi_i,\phi_j)$, and $a_k(\phi_i,\xi_j)$.\Comment{\eqref{eq:affine_forms}}
\State Assemble vectors $F_k(\xi_i)$ and $F_k(\phi_i)$.\Comment{\eqref{eq:affine_forms}}
\end{algorithmic}
\end{algorithm}

If the algorithm terminates with $N < N_{\text{max}}$, and $\delta < 1$ then \eqref{eq:stopping_criteria} and Theorem~\ref{prop1} imply
\begin{equation}
	\frac{M^N(\vmu)}{\|u_{\vmu} - u^N_{\vmu}\|_X} \leq \frac{1 + \delta}{1 - \delta},\hspace{6mm} \forall \vmu \in \mathcal{D}_{\text{train}}.
\end{equation}

Thus, we obtain an upper bound for the effectivity ratio in $\mathcal{D}_{\text{train}}$, in addition to the error itself.

\subsection{Online Algorithm}
In the online stage, for a new parameter $\vmu$, the corresponding RB approximation and error bound is computed as follows.  First, the $N\times N$ projected problem is assembled via
\begin{align}\label{eq:online_primal}
\begin{split}
A_N(\vmu) &= \sum_{k=1}^{Q_a}\theta_k^a(\vmu)a_k(\xi_j,\xi_i),\\
\vb_N(\vmu) &= \sum_{m=1}^{Q_f}\theta_m^F(\vmu)F_m(\xi_i).
	\end{split}
\end{align}
Since the parameter-independent terms in \eqref{eq:online_primal} are assembled in the offline stage, assembly in the online stage incurs a cost of $\mathcal{O}(Q_aN^2 + Q_fN)$.  The resulting dense system
$A_N(\vmu) \vc_N = \vb_N(\vmu)$
 is solved directly, incurring a cost of $\mathcal{O}(N^3)$.
Here $\vc_N\in \mathbb{R}^N$ are the the RB coefficients, that is, $u_N(\vmu) = \sum_{n=1}^N [\vc_N]_n \xi_n$.\par 
To compute the approximation of the error, the $N\times N$ matrix 
\begin{align}\label{eq:error_matrix}
	\widehat{A}_N(\vmu) = \sum_{k=1}^{Q_a}\theta_k^a(\vmu)a_k(\phi_j,\phi_i)
\end{align}
is assembled, which is once again the linear combination of pre-assembled matrices; the cost is $\mathcal{O}(Q_a N^2)$.  To compute the right-hand side of the error equation, the following computations are performed:
\begin{align}\label{eq:error_rhs}
	\begin{split}
		G_k(\phi_i) &= \sum_{j=1}^Na_k(\xi_j,\phi_i)[\vc_N]_j\\
		\widehat{\vb}_N(\vmu) &= \sum_{m=1}^{Q_f}\theta_m^F(\vmu)F_m(\phi_i) - \sum_{k = 1}^{Q_a}\theta_k^a(\vmu)G_k(\phi_i).
	\end{split}
\end{align}
This requires $\mathcal{O}(Q_aN^2)$ operations to perform the matrix vector multiplications for $G_k$, and $\mathcal{O}\left((Q_a + Q_f)N\right)$ cost to form the necessary linear combination of vectors.  Finally, the solution of this system is also $\mathcal{O}(N^3)$, which determines the RB error coefficients $\widehat{\vc}_N \in \mathbb{R}^N$, that is, $\hat{e}^N = \sum_{n=1}^N [\hat{c}_N]_n \phi_n$.
\par 
To form the error bound, the quantities $\|\hat{e}^N_{\vmu}\|_X$ and $\|\rho^N_{\vmu}\|_Y$ must be computed.  Since the error basis functions $\{\phi_j\}_{j=1}^N$ were orthonormalized in the (computable) $X$-norm, the first quantity is easily computed by
\begin{align}\label{eq:error_norm}
	\|\hat{e}^N_{\vmu}\|_X = \sqrt{\widehat{\vc}_N^T\widehat{\vc}_N}.
\end{align}
To compute the norm of the auxiliary residual, we compute
\begin{align}\label{eq:aux_res_expand}
	\begin{split}
		\|\rho_{\vmu}^N\|_Y^2 &= \|f_{\vmu} - \mathcal{L}_{\vmu}u_{\vmu}^N- \mathcal{L}_{\vmu}\hat{e}_{\vmu}^N\|_Y^2\\
		&= (f_{\vmu},f_{\vmu})_Y + \left[(\mathcal{L}_{\vmu}u_{\vmu}^N,\mathcal{L}_{\vmu}u_{\vmu}^N)_Y - (f_{\vmu},\mathcal{L}_{\vmu}u_{\vmu}^N)_Y\right]\\
		&+\left[(\mathcal{L}_{\vmu}\hat{e}_{\vmu}^N,\mathcal{L}_{\vmu}\hat{e}_{\vmu}^N)_Y + (\mathcal{L}_{\vmu}u_{\vmu}^N,\mathcal{L}_{\vmu}\hat{e}_{\vmu}^N)_Y - (f_{\vmu},\mathcal{L}_{\vmu}\hat{e}_{\vmu}^N)_Y\right]\\
		&-(f_{\vmu},\mathcal{L}_{\vmu}u_{\vmu}^N)_Y - \left[(f_{\vmu},\mathcal{L}_{\vmu}\hat{e}_{\vmu}^N)_Y - (\mathcal{L}_{\vmu}u_{\vmu}^N,\mathcal{L}_{\vmu}\hat{e}_{\vmu}^N)_Y\right]\\
		&= (f_{\vmu},f_{\vmu})_Y - (f_{\vmu},\mathcal{L}_{\vmu}u_{\vmu}^N)_Y - \left[(f_{\vmu},\mathcal{L}_{\vmu}\hat{e}_{\vmu}^N)_Y - (\mathcal{L}_{\vmu}u_{\vmu}^N,\mathcal{L}_{\vmu}\hat{e}_{\vmu}^N)_Y\right]\\
		&= (f_{\vmu},f_{\vmu})_Y - \vb_N^T\vc_N - \widehat{\vb}^T\widehat{\vc}_N.
	\end{split}
\end{align}
Thus, the auxiliary residual is computed through inner products between vectors in $\mathbb{R}^N$ and the computation of $(f_{\vmu},f_{\vmu})_Y$.  For LSFEM, the $Y$-norm usually corresponds to the $L^2$-inner product, the affine parametric dependence of the problem ensures the quantity $(f_{\vmu},f_{\vmu})_Y$ is computed at a cost independent of the problem size, either analytically or through quadrature.\par 
If a method such as SCM is used to compute $\alpha_{\text{LB}}(\vmu)$ at a cost independent of problem size, every necessary computation of the online stage is performed at a cost that is independent of the size of the high-fidelity problem.  Finally, the error estimate is computed via
\begin{equation}\label{eq:final_error_est}
M^N(\vmu) = \|\hat{e}_{\vmu}^N\|_X + \frac{\|\rho_{\vmu}^N\|_Y}{\sqrt{\alpha_{\text{LB}}(\vmu)}}	
\end{equation}

These steps are collected in Algorithm \ref{alg:online}.

\begin{algorithm}
\caption{Least Squares Reduced Basis Online Algorithm}
\label{alg:online}
\begin{algorithmic}
\State Input: parameter $\vmu \in \mathbb{R}^P$
\State Assemble RB system: $A_N(\vmu)$ and  $\vb_N(\vmu)$\Comment{\eqref{eq:online_primal}}
\State Solve RB system $A_N(\vmu)\vc_N = \vb_N(\vmu)$
\State Assemble RB error system: $\widehat{A}_N(\vmu)$, $G_k(\phi_i)$ and $\widehat{\vb}_N(\vmu)$ \Comment{\eqref{eq:error_matrix},\eqref{eq:error_rhs}}
\State Solve RB error system $\widehat{A}_N(\vmu)\widehat{\vc}_N = \widehat{\vb}_N(\vmu)$
\State Compute error norm $\|\hat{e}^N_{\vmu}\|_X = \sqrt{\widehat{\vc}_N^T\widehat{\vc}_N}$ \Comment{\eqref{eq:error_norm}}
\State Compute auxiliary residual $\|\rho_{\vmu}^N\|_Y^2 = (f_{\vmu},f_{\vmu})_Y - \vb_N^T\vc_N - \widehat{\vb}^T\widehat{\vc}_N$\Comment{\eqref{eq:aux_res_expand}}
\State Compute $\alpha_{\text{LB}}(\vmu)$ (e.g. through online phase of SCM)
\State Compute error estimate $M^N(\vmu) = \|\hat{e}_{\vmu}^N\|_X + \frac{\|\rho_{\vmu}^N\|_Y}{\sqrt{\alpha_{\text{LB}}(\vmu)}}$ \Comment{\eqref{eq:final_error_est}}
\end{algorithmic}
\end{algorithm}

\subsection{The Coercivity Constant and the Rigor of the Error Estimate}\label{sec:coercivity}

  We make use of a lower bound $\alpha_{\text{LB}}$ in Algorithm~\ref{ls-algorithm} to the coercivity constant $\alpha$. In practice, this lower bound is approximated  via the SCM,
  which computes a lower bound to the discrete coercivity constant $\alpha^h(\vmu)$. We now examine the implication of this approximation.

The coercivity constant $\alpha(\vmu)$ is the infimum of a Rayleigh Quotient
\begin{align}
	\alpha(\vmu) = \inf_{u \in X}\frac{\|\mathcal{L}_{\vmu}u\|_Y^2}{\|u\|_X^2}.
\end{align}
The discrete coercivity constant $\alpha^h(\vmu)$ is the infimum of the same Rayleigh Quotient, but $u$ is restricted to be in the finite-dimensional approximation space:
\begin{align}
	\alpha^h(\vmu) = \inf_{u^h \in X^h}\frac{\|\mathcal{L}_{\vmu}u^h\|_Y^2}{\|u^h\|_X^2}.
\end{align}
In a conforming discretization, since $X^h \subset X$, it follows that $\alpha(\vmu)\leq \alpha^h(\vmu)$.  Thus there is a possibility that
\begin{align}
	\alpha(\vmu) < \alpha_{\text{LB}}(\vmu) \leq \alpha^h(\vmu).
\end{align}
This leads to a potential underestimation of the error in $M^h(\vmu)$ or $M^N(\vmu)$.  However, extensive research has been done with respect to the convergence of finite element approximations of eigenvalue problems; see \cite{boffi2010finite} for a thorough overview.\par
If the problem
\begin{align}\label{eq:eig_variational}
	(\mathcal{L}_{\vmu}u,\mathcal{L}_{\vmu}v)_Y = (f,v)_X, \hspace{4mm} \forall v \in X,
\end{align}
has a \emph{compact} solution operator $T_{\vmu}:X \to X$, where $T_{\vmu}f = u$ is the solution to the variational problem \eqref{eq:eig_variational}, then the error $\alpha^h(\vmu) - \alpha_(\vmu)$ is bounded by the \emph{square} of the approximation error of the finite dimensional space $X^h$.  Thus, if a LSFEM with order of convergence $r$ is used, we would expect
\begin{equation}
	\alpha^h(\vmu) \leq \alpha(\vmu) + \mathcal{O}(h^{2r}),
\end{equation}
where $h$ is the mesh size.  That is, the error in the approximation of the coercivity constant is much lower than the error in the LSFEM solution.
Thus, in the worst case, a non-rigorous lower bound $\alpha_{\text{LB}}$ computed by the successive constraint method would still be bounded by $\alpha(\vmu) + \mathcal{O}(h^{2r})$ and thus for sufficiently small $h$,
\begin{align}
	\begin{split}
		\frac{1}{\sqrt{\alpha(\vmu)}} \leq \frac{1}{\sqrt{\alpha_{\text{LB}}(\vmu) - \mathcal{O}(h^{2r})}} = \frac{1}{\sqrt{\alpha_{\text{LB}}(\vmu)}}(1 + \mathcal{O}(h^{2r})).
	\end{split}
\end{align}
If this estimate holds, the lower bound built by SCM is asymptotically rigorous, and so is the corresponding error estimate $M^N(\vmu)$.\par 
Showing the compactness of the solution operator of \eqref{eq:eig_variational} will be the focus of future research, but numerical experiments have shown evidence of the higher order convergence rate.  In Figure \ref{fig:coercivity}, convergence to the exact coercivity constant of an LSFEM applied to the ordinary differential equation $-u'' = f$ with homogeneous Dirichlet boundary conditions is shown. In this case, the first-order reformulation leads to the operator $\mathcal{L}: H^1(\Omega) \times H_0^1(\Omega) \to L^2(\Omega)\times L^2(\Omega)$ defined as
\begin{equation}\label{eq:first_order_op}
	\mathcal{L}\left[(q,u)\right] = \begin{pmatrix}
		q + u'\\
		q'
	\end{pmatrix},
\end{equation}
with coercivity constant $\alpha = 1 - (1 + \sqrt{1 + 4\pi^2})/2(1 + \pi^2)$.  Using piecewise linear finite elements with 1st order convergence, we see the expected 2nd order convergence of the discrete coercivity constant.
\begin{figure}[ht]
\centering
	\includegraphics[scale=0.45]{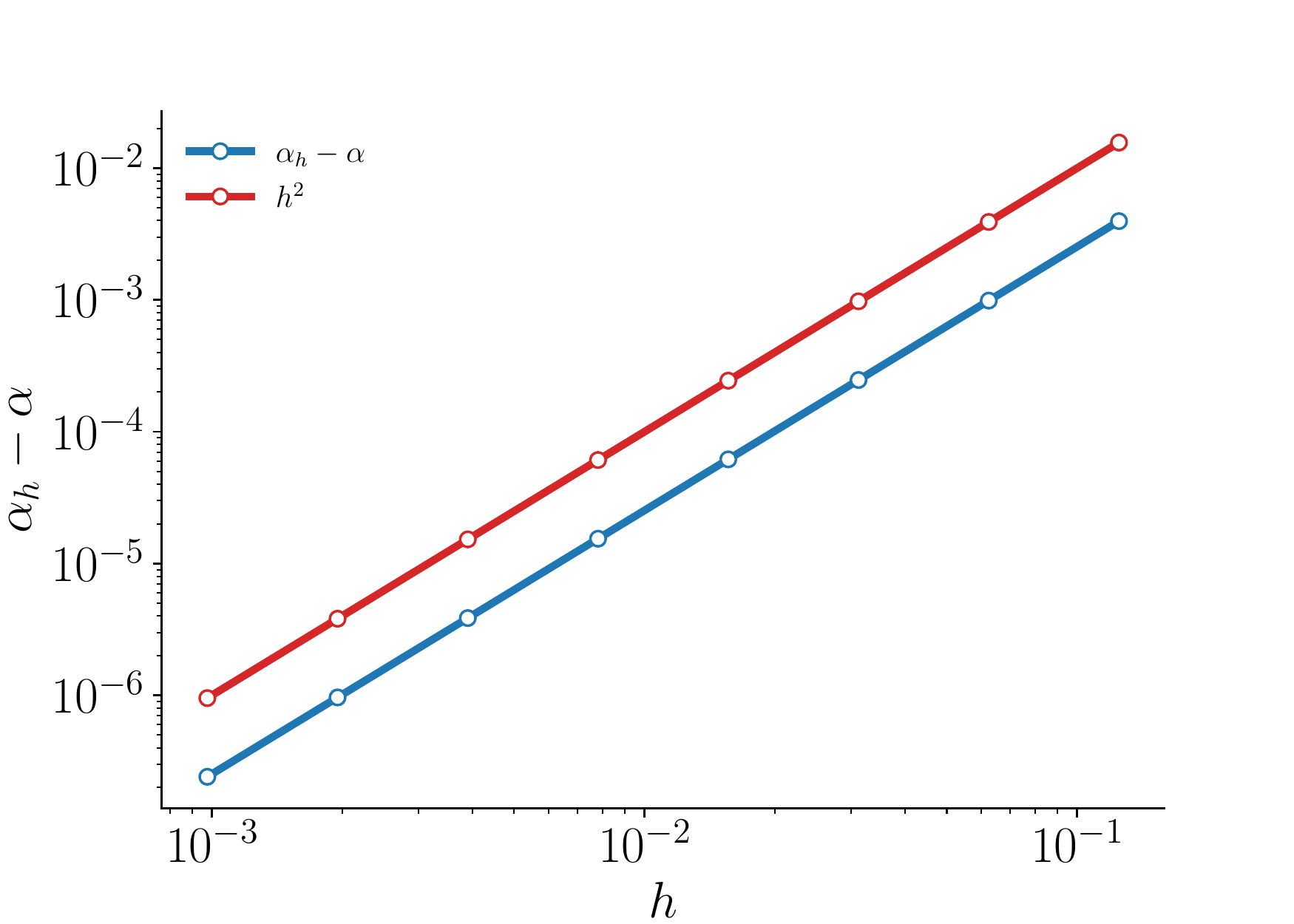}
	\caption{Convergence of the discrete coercivity constant for the operator defined in \eqref{eq:first_order_op}.  A piecewise linear finite element space with mesh spacing $h$ is used.}\label{fig:coercivity}
\end{figure}

\section{Numerical Evidence}\label{sec:numerics}

In this section we present numerical numerical evidence in support of the
LSFEM-RB method introduced in \S~\ref{sec:lsrbm}.  A single parameter
study is given in \S~\ref{results-thermal-1} in order to detail the bounds
on the error, while a three-parameter study is discussed in
\S~\ref{sec:thermal3}.  Finally, in \S~\ref{sec:elasticity}, the method
is applied to an elasticity problem to highlight robustness.  The software library
Firedrake~\cite{rathgeber_etal_firedrake} is used in the following tests. Moreover, it is easy to check that the numerical examples considered below  are affinely parametrized in the least-squares setting.

\subsection{Thermal Block~---~1 Parameter}\label{results-thermal-1}

We first apply the LSFEM-RB framework to a standard test problem in the reduced basis community, the ``thermal block'' problem~\cite{haasdonk2017reduced, rozza2007reduced, schmidt2020rigorous}.
The governing partial differential equation is a variable coefficient Poisson problem:
\begin{align}
\begin{split}
	-\nabla \cdot\kappa(\bm{x})\nabla u &= 0 \hspace{6mm} \text{in } \Omega,\\
	u &= 0 \hspace{6mm} \text{on } \Gamma_D,\\
	\kappa(\bm{x})\nabla u\cdot \bm{n} &= g, \hspace{6mm} \text{on } \partial\Omega\setminus\Gamma_D.
\end{split}
\end{align}
Here, $\Omega$ is the unit square, $\Gamma_D = \{(x,y) \in \partial\Omega\  |\  y = 1\}$, $g(x,y)$ is a function satisfying $g(0,y) = g(1,y) = 0$ and $g(x,0) = 1$, and $\kappa(\bm{x})$ is a piecewise constant function taking two different values in subdomains $\Omega_1,\Omega_2$; see Figure~\ref{one_parameter_domain}.  Specifically,
\begin{equation}
  \kappa(\vx) =
  \begin{cases}
    \mu & \vx \in \Omega_1\\
      1 & \vx \in \Omega_2,
  \end{cases}
\end{equation}
with $\mu \in [10^{-1},10^{1}]$.
\begin{figure}[ht]
\begin{center}
\begin{tikzpicture}
\draw (0,0) rectangle (4,4);
\draw (2,0) -- (2,4);
\node at (1,2) {$\kappa(\bm{x}) = \mu$};
\node at (1,1) {$\Omega_1$};
\node at (3,2) {$\kappa(\bm{x}) = 1$};
\node at (3,1) {$\Omega_2$};
\end{tikzpicture}
\caption{Variable Poisson problem with conductivity in two subdomains.}\label{one_parameter_domain}
\end{center}
\end{figure}
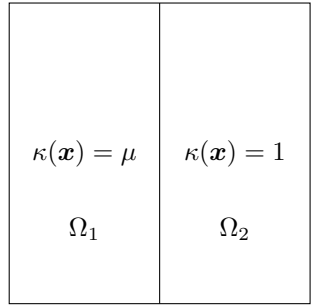

By introducing a constant lifting function $\bm{q}_{\ell} = (0,-1)^T$, and defining the flux variable
\begin{equation}
	\bm{q} = -\kappa\nabla u + \bm{q}_{\ell},
\end{equation}
the following equivalent first order system is obtained:
\begin{align}\label{operator-thermal-1}
\begin{split}
	\kappa^{-1/2}\bm{q} + \kappa^{1/2}\nabla u &= \kappa^{-1/2}\bm{q}_\ell \hspace{6mm} \text{in }\Omega,\\
	\nabla \cdot \bm{q} &= 0 \hspace{16mm} \text{in }\Omega,\\
	u &= 0 \hspace{16mm} \text{on }\Gamma_D,\\
	\bm{q}\cdot\bm{n} &= 0 \hspace{16mm} \text{on }\partial\Omega\setminus\Gamma_D.
\end{split}
\end{align}
The first order system~\eqref{operator-thermal-1} defines an operator $\mathcal{L}_{\vmu}$ with domain $X \subset H(\text{div})\times H^1(\Omega)$ and range $Y = (L^2(\Omega))^2 \times L^2(\Omega)$; here $X$ is the subspace of functions that satisfy the homogeneous boundary conditions.  It is shown in~\cite{bochev2009least, pehlivanov1994least} that the resulting operator $\mathcal{L}_{\vmu}$ satisfies~\eqref{energy_balance}~---~i.e., is continuous and has a bounded inverse~---~with respect to the $H(\text{div})\times H^1$ norm.  Using this norm on $X$ leads to a well-posed problem and the applicability of the error estimate~\eqref{tight_upper_bound}.

We compute an approximation using the subspace $X^h = (\text{RT}_0) \times P_1$, approximating $\bm{q}$ by the lowest order Raviart-Thomas space~\cite{raviart1977primal} and $u$ by piecewise linear polynomials.  The approximations $\bm{q}^h$ and $u^h$ are computed on a mesh corresponding to 1,016 degrees of freedom.

The reduced basis is constructed using a sample of 50 logarithmically spaced samples $\mu \in [0.1, 10.]$.  The auxiliary error equation is solved on the same mesh using $(\text{RT}_1)\times P_2$ elements, corresponding to 3,556 degrees of freedom.\par
The offline algorithm consists of two stages: the offline SCM portion and the construction of the reduced basis.  The SCM requires the solution of 11 generalized eigenvalue problems of size $1,016\times 1,016$.  For the construction of the reduced basis, the algorithm terminates after computing only $N = 3$ full-order solutions are required.  The final tolerance upon termination is approximately $\delta \approx 0.3984$.
Thus, the error estimate is guaranteed to satisfy the effectivity bound
\begin{align}\label{thermal-1-effectivity-bound}
	\frac{M^N(\mu)}{\|e_{\mu}^N\|_X} \leq \frac{1 + \delta}{1 - \delta} \approx 2.3244.
\end{align}
Thus, our error bound overestimates the true error by at worst a factor of approximately 2.3244.

To test the reduced basis, we generate 100 randomly sampled parameter values $\mu \in [0.1, 10.0]$ and compute a reference solution using $(\text{RT}_2)\times P_3$ elements after performing two uniform mesh refinements.  This corresponds to 121,920 degrees of freedom.  We then compute the reduced basis approximation for these parameter values and the corresponding RB error estimate.  For each parameter value, the lower bound for the coercivity constant is computed through the online SCM algorithm; the resulting linear program has 3 variables and 16 inequality constraints.  The reduced basis solution and approximate error requires the solution of two $3\times 3$ linear systems.  The true error and the corresponding error estimate are shown for the test set in Figure~\ref{thermal-1-error}.  The error bound is rigorous and resolves the difference in error throughout the parameter domain.

\begin{figure}[ht]
\centering
\begin{subfigure}[b]{.48\textwidth}
  \vspace{0pt}
	\centering
  \includegraphics[height=1.76in]{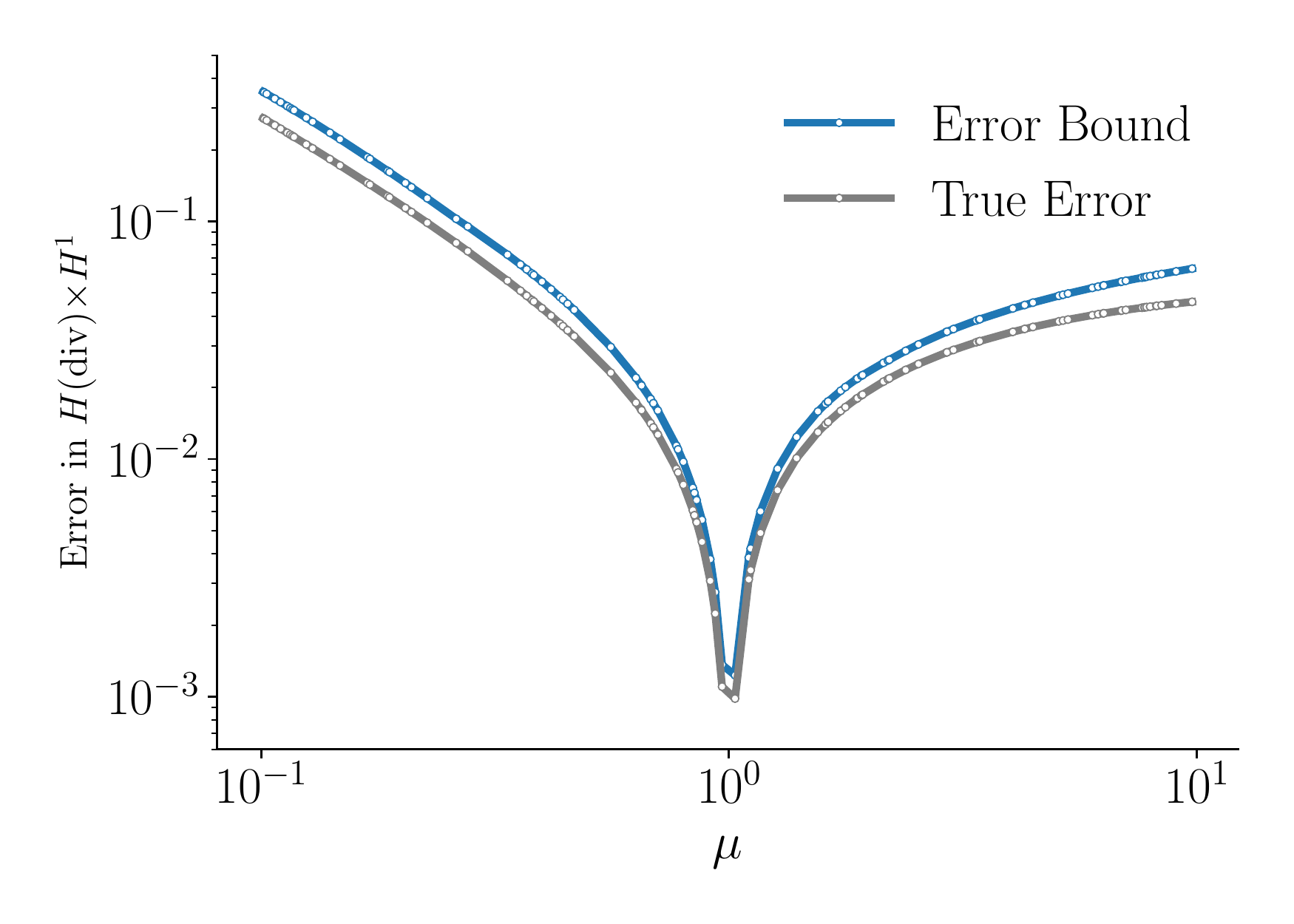}
  \caption{}\label{thermal-1-error}
\end{subfigure}\hfill
\begin{subfigure}[b]{.48\textwidth}
  \vspace{0pt}
	\centering
	\includegraphics[height=1.76in]{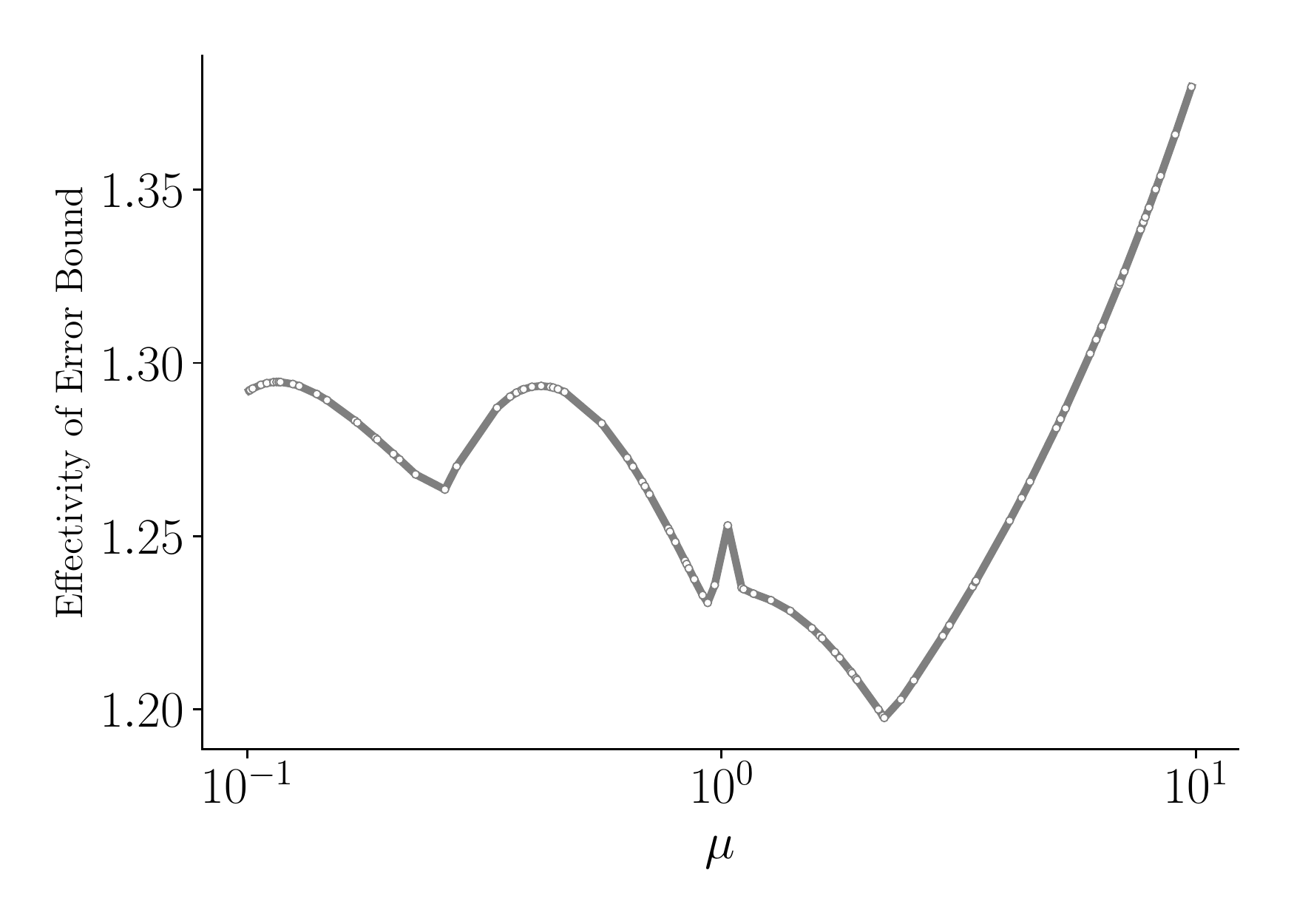}
	\caption{}\label{thermal-1-effectivity}
\end{subfigure}
\caption{(a) The error between the RB solution and the reference solution, along with the corresponding RB error bound over the testing set of parameter values of $\mu$.  (b) The effectivity of the RB error estimate in~\eqref{thermal-1-effectivity-bound} over the testing set of parameter values.}
\end{figure}

In Figure~\ref{thermal-1-effectivity}, we plot the effectivity of the error estimate over the same testing set of data.  The error bound overestimates the error by a small factor, less than $1.40$, which outperforms the effectivity bound in~\eqref{thermal-1-effectivity-bound}.\par

Finally, we plot the run-time for each new parameter value encountered in the online phase for both the reduced basis method, and solutions computed using only full-order solutions in Figure \ref{fig:thermal-1-run_time}.\footnote{The compute node uses two Intel Xeon E5-2630 processors.}  The reduced basis method incurs an offline cost of approximately 4 seconds; thus, for 16 parameter values or fewer, the reduced basis approach is more expensive.  However, because the size of the linear system is only $3\times 3$, total run-time grows extremely slowly for additional parameters.  Thus after 17 parameter values, the reduced basis approach becomes more computationally efficient, with rapidly increasing computational gains as the number of online parameters grows.

\begin{figure}[ht]
\centering
	\includegraphics[scale=0.50]{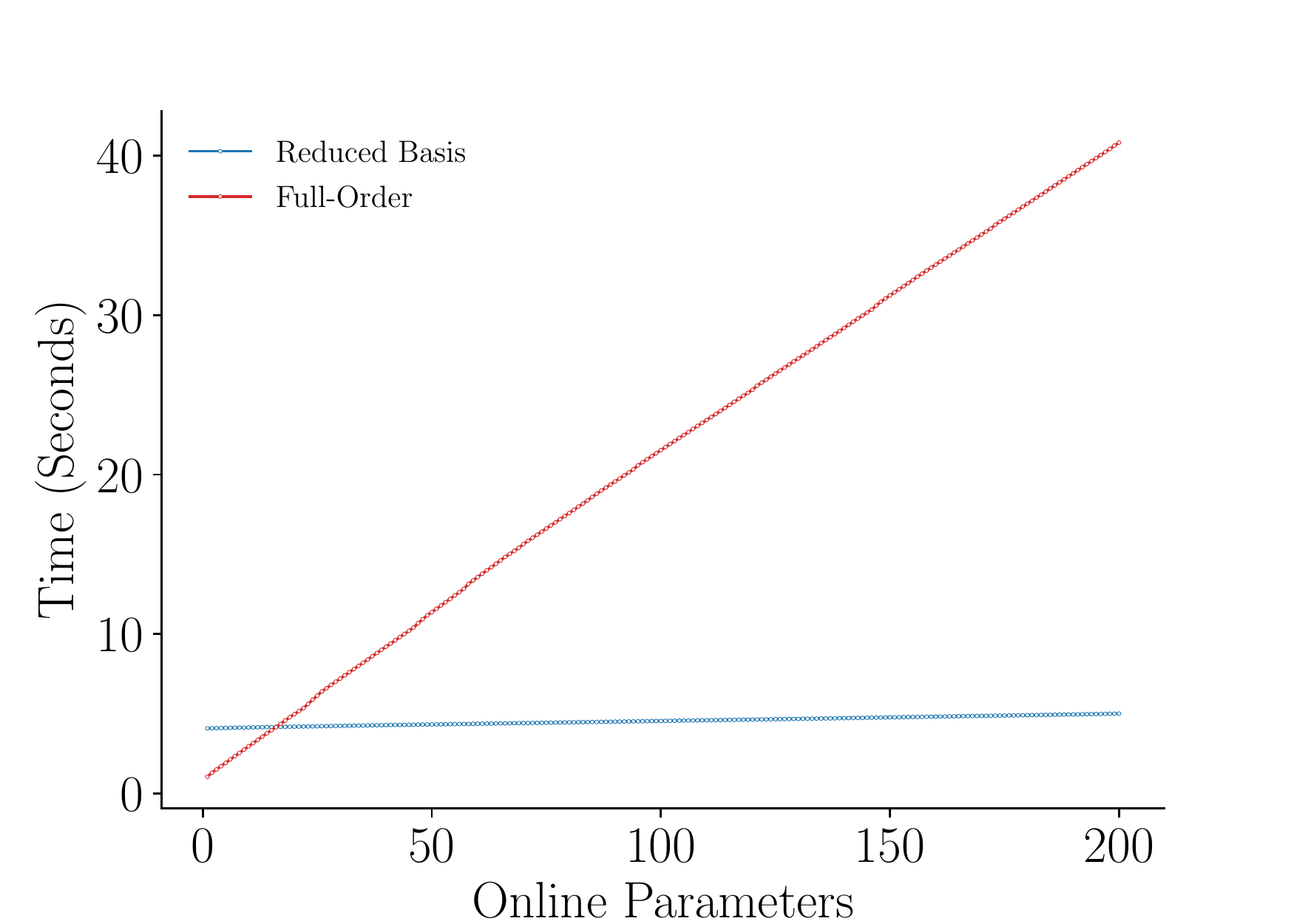}
  \caption{The total run-time (offline and online) of the reduced basis method for each new parameter value encountered in the online stage compared to the use of full-order models.  The reduced basis method incurs an offline cost of approximately 4 seconds, but quickly becomes an extremely efficient alternative over the use of full-order models.}\label{fig:thermal-1-run_time}
\end{figure}

\subsection{Thermal Block~---~3 Parameters}\label{sec:thermal3}

We repeat the same variable coefficient Poisson problem, now with four subdomains, and consequently, three different parameter values $\vmu = (\mu_1,\mu_2, \mu_3)^T$.
\begin{figure}[ht]
\begin{center}
\begin{tikzpicture}
\draw (0,0) rectangle (4,4);
\draw (2,0) -- (2,4);
\draw (0,2) -- (4,2);
\node at (1,1) {$\kappa(\bm{x}) = \mu_1$};
\node at (3,1) {$\kappa(\bm{x}) = \mu_2$};
\node at (1,3) {$\kappa(\bm{x}) = \mu_3$};
\node at (3,3) {$\kappa(\bm{x}) = 1$};
\end{tikzpicture}
\caption{Conductivity for the variable coefficient Poisson equation with four subdomains.  The parameter $\mu$ takes values in the interval $[5^{-1}, 5^1]$.}\label{three_parameter_domain}
\end{center}
\end{figure}

The flux reaches a singularity $(1/2, 1/2)$, where all four subdomains meet, which necessitates a finer grid and requires computing the auxiliary error equation by performing a mesh refinement in addition to the increase in polynomial order.  Our approximation is again computed on $X^h = (\text{RT}_0) \times P_1$, with 1,456 degrees of freedom.  The auxiliary error equation is computed after one mesh refinement using $(\text{RT}_1) \times P_2$ elements, which corresponds to 20,384 degrees of freedom.

The reduced basis is constructed using a sample of 75 randomly generated samples $\vmu \in [0.2, 5.]^3$ using Latin hypercube sampling.
We also include the vertices of the parameter domain cube.
The SCM method requires the solution of 45 generalized eigenvalue problems of size $1,456\times 1,456$, and the reduced basis construction terminates after the computation of $N = 13$ basis functions with a final tolerance of $\delta \approx 0.7557$.
Thus, the error estimate is guaranteed to satisfy the effectivity bound
\begin{align}\label{thermal-3-effectivity-bound}
	\frac{M^N(\mu)}{\|e_{\mu}^N\|_X} \leq \frac{1 + \delta}{1 - \delta} \approx 7.1877.
\end{align}

To test the reduced basis, we generate 100 randomly sampled parameter values $\mu \in [0.2, 5.0]^3$ using Latin hypercube sampling and compute a reference solution using $(\text{RT}_2)\times P_3$ elements after performing two uniform mesh refinements.
This corresponds to 174,720 degrees of freedom.  We then compute the reduced basis approximation for these parameter values and the corresponding RB error estimate.  For each parameter, $\alpha_{\text{LB}}(\vmu)$ is computed through the online SCM algorithm; the resulting linear program has 7 variables and 39 inequality constraints.  The reduced basis solution and approximate error requires the solution of two $13\times 13$ linear systems.  The true error and the corresponding error estimate are shown for the test set in Figure~\ref{thermal-3-error}.  Once again, we see a rigorous upper bound of the error over the testing set.\par 

\begin{figure}[ht]
\centering
\begin{subfigure}[b]{.48\textwidth}
  \vspace{0pt}
	\centering
  \includegraphics[height=1.76in]{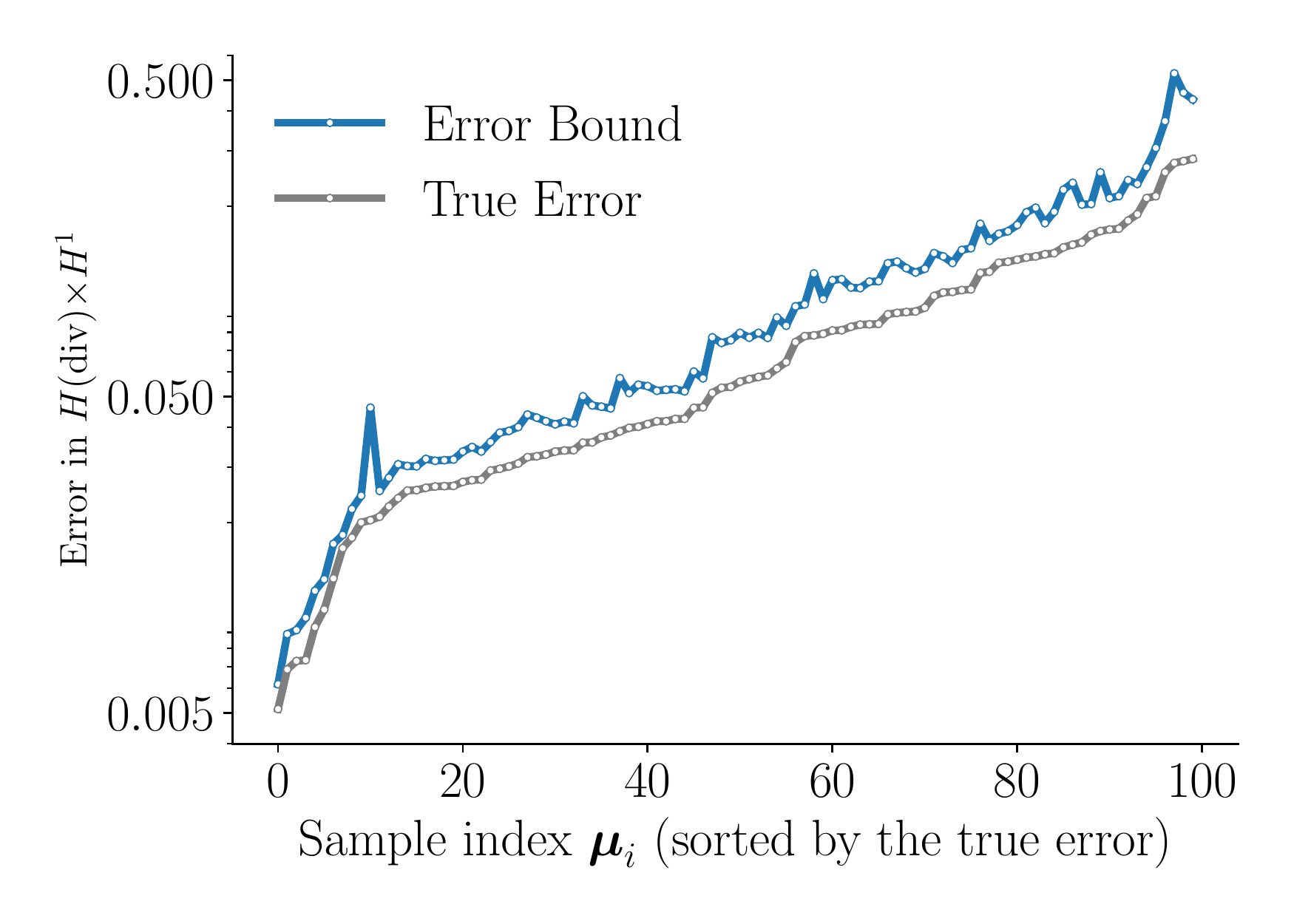}
  \caption{}\label{thermal-3-error}
\end{subfigure}\hfill
\begin{subfigure}[b]{.48\textwidth}
  \vspace{0pt}
	\centering
	\includegraphics[height=1.76in]{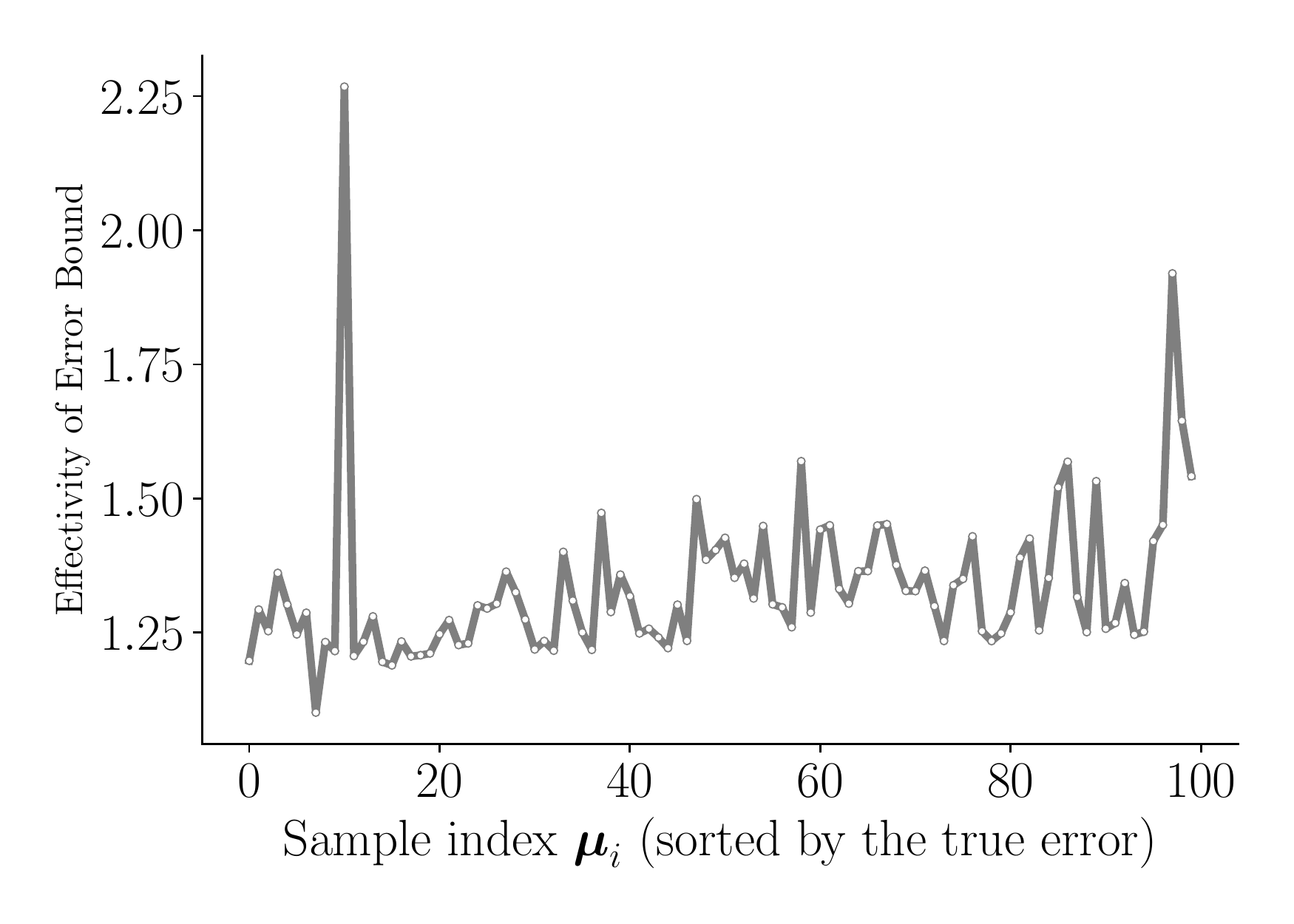}
	\caption{}\label{thermal-3-effectivity}
\end{subfigure}
\caption{(a) The error between the RB solution and the reference solution, along with the corresponding RB error bound over the testing set of parameter values of $\mu$ for the three parameter thermal block.  (b) The effectivity of the RB error estimate in~\eqref{thermal-1-effectivity-bound} over the testing set of parameter values in the three parameter thermal block.}
\end{figure}

Plotting the effectivity of the parameter set in Figure~\ref{thermal-3-effectivity}, we see that the effectivity is bounded by 1.5 over much of the testing set, and the error bound overestimates the true error by no more than a factor $< 2.4
$.  In this case the guaranteed effectivity bound~\eqref{thermal-3-effectivity-bound} is a slightly pessimistic prediction on the tightness of the error bound.\par
We also examine the convergence of the reduced solution as the dimension of the basis increases.  We choose the parameter value in the testing set with the largest effectivity, i.e. the one corresponding to the largest over-estimation of the error via the reduced basis method with $N = 13$.  In this case, $\vmu \approx (.223, .244, .746)$ and the error is overestimated by a factor of approximately 2.26.  The true error and corresponding error estimate is shown in Figure \ref{fig:error_conv} as a function of basis dimension.  The error estimate remains rigorous and reliable for all sizes of basis; the worst effectivity is approximately $3.76$ corresponding to $N = 3$.

\begin{figure}[ht]
\centering
	\includegraphics[scale=0.50]{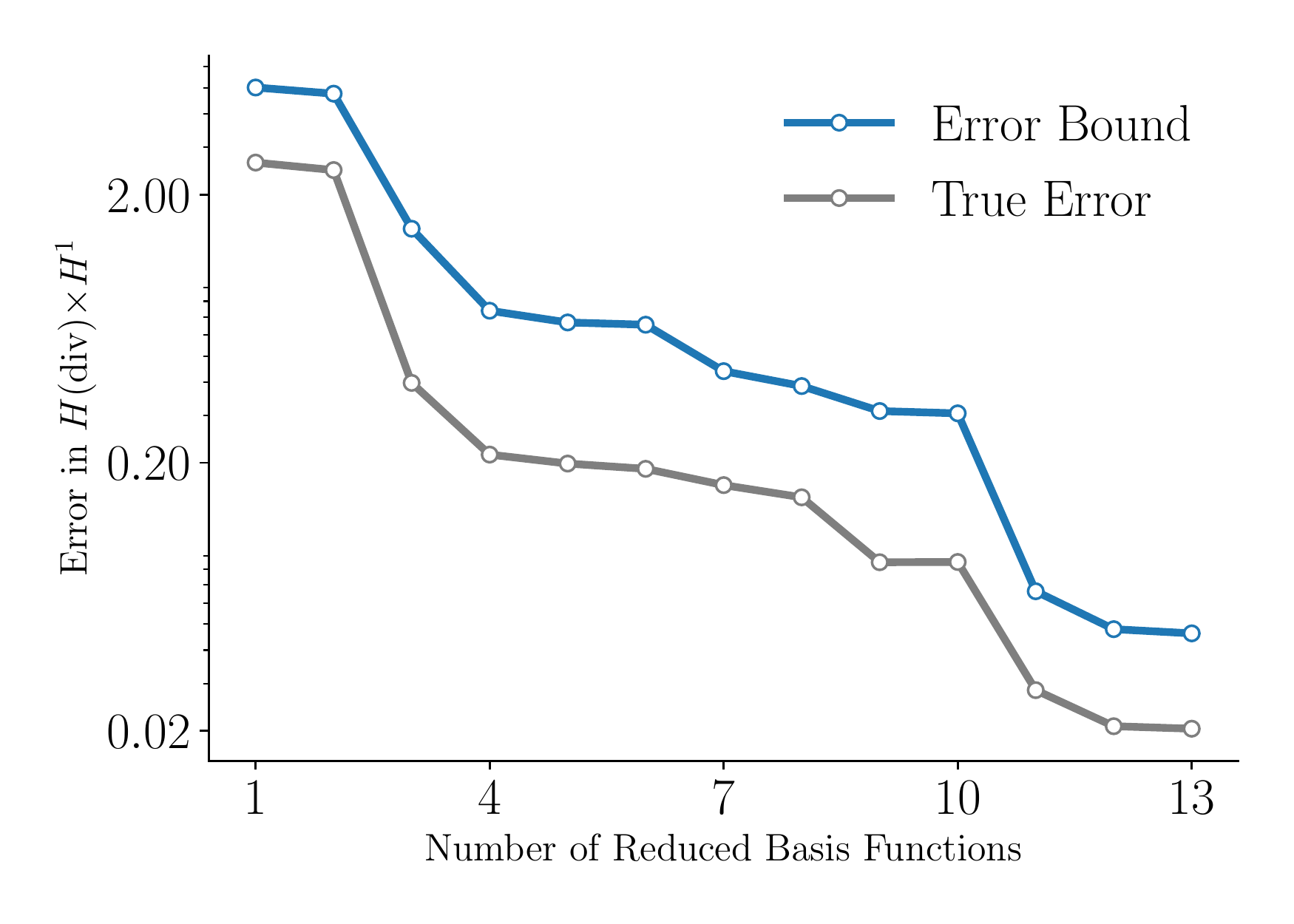} 
  \caption{Convergence of the error bound and true error for increasing RB basis dimension $N$.  The parameter value is $\vmu \approx (.223, .244, .746)$, corresponding to the largest over-estimation of the error.}\label{fig:error_conv}
\end{figure}

In terms of run-time, the online algorithm takes considerably longer than the 1 parameter thermal block problem; approximately 43.5 seconds.  However, the average run-time per parameter for the RB solution and error estimate in the online stage is only around $5.4\times 10^{-3}$ seconds.  In comparison, the full-order solution and error estimate takes $1.4$ seconds per parameter on average.  The speed-up is over 250 times faster and the RB approach is more efficient for 29 or more parameters in the online stage.

\subsection{Linear Elasticity}\label{sec:elasticity}

For this experiment we consider linear elasticity and the model problem originating from~\cite{ramm2003error}.  The setup consists of a two-dimensional plate with a circular whole at the center.  Given the symmetry of the problem we consider only the upper right quarter for $\Omega$ as in Figure~\ref{fig:elasticity-domain}.
\begin{figure}[ht]
\centering
\begin{subfigure}[b]{.49\textwidth}
  \vspace{0pt}
	\centering
  \includegraphics[height=2in]{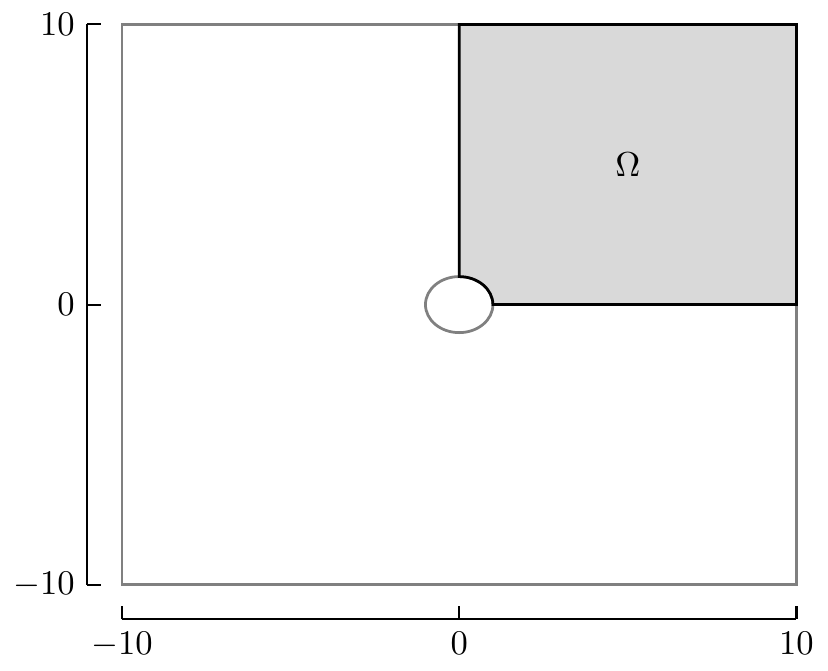}
  \caption{Domain $\Omega$.}\label{fig:elasticity-domain}
\end{subfigure}\hfill
\begin{subfigure}[b]{.49\textwidth}
  \vspace{0pt}
	\centering
	\includegraphics[height=1.5in]{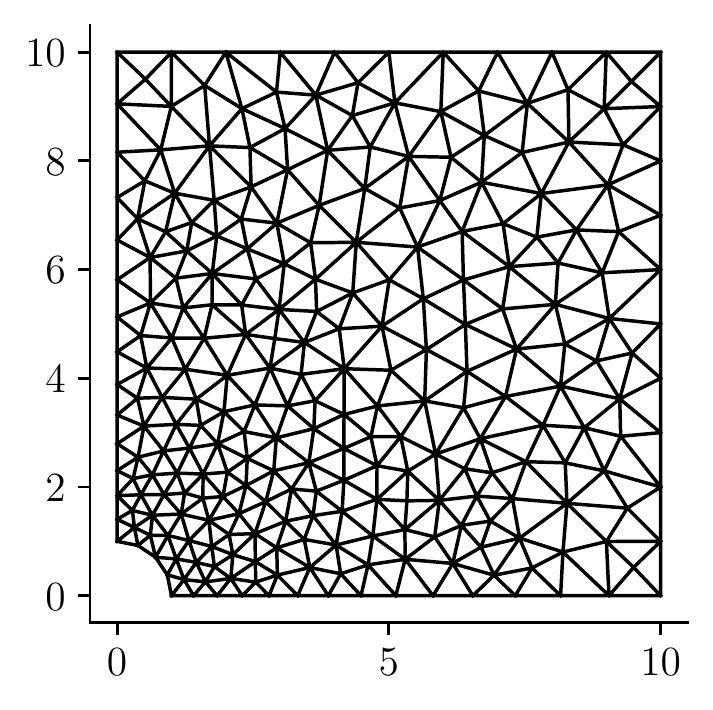}
	\caption{Mesh with 466 cells.}\label{elasticity_mesh}
\end{subfigure}
\caption{Domain and mesh for the elasticity problem.}\label{fig:elasticity-setup}
\end{figure}

We denote the material properties as
$E$ (Young's Modulus) and $\nu$ (Poisson's ratio), which are related through the  Lam\'e constants
\begin{equation}
	\lambda = \frac{E\nu}{(1 + \nu)(1-2\nu)},\hspace{8mm} \mu = \frac{E}{2(1 + \nu)}.
\end{equation}
Then let $\vu = [u_1, u_2]^T$ be the displacement of the plate, and let $\vsigma$ be the $2\times 2$ stress tensor.  Using a substitution $\vsigma \leftarrow \frac{1}{\mu}\vsigma$, leads to a change in units so that $\mu = 1$.  With this we arrive at the following first-order system of PDEs, following~\cite{cai2004least}:
\begin{align}\label{elasticity_system_homogeneous}
\begin{split}
	\mathcal{A}\vsigma - \vepsilon(\vu) &= \mathbf{0},\\
	\nabla \cdot \vsigma &= \mathbf{0}.
	\end{split}
\end{align}
Here, we assumed no body forces, the divergence of a tensor is taken row-wise, and the operators $\mathcal{A}$ and $\vepsilon$ are defined as:
\begin{align}
	\begin{split}
		\mathcal{A}\vsigma &= \frac{1}{2}\left(\vsigma - \frac{\lambda}{2(\lambda + 1)}(\text{tr}\vsigma)\vI\right) = \frac{1}{2}\left(\vsigma - \nu(\text{tr}\vsigma)\vI\right) \\
		\epsilon(\vu) &= \frac{1}{2}\left(\nabla\vu + \nabla\vu^T\right).
	\end{split}
\end{align}
We apply a (scaled by $\mu$) traction force via the boundary condition $\vsigma\vn = K\vn$, along the top boundary $y = 10$.  To enforce this inhomogenous boundary condition, a lifting function $\vsigma_{\ell}$ is introduced that satisfies this condition.

The parameters for this problem are now of the form $\vmu = [\mu_1,\mu_2]^T = [\nu, K]^T$.  We restrict Poisson's ratio $\nu$ to values in $[0.1, 0.5]$, since $0.5$ corresponds to an incompressible material. In addition, we limit the scaled traction coefficient $K$ to the interval $[-0.25, 0.25]$.

Each row of $\vsigma$ is viewed as a two-dimensional vector, and we define an operator $\mathcal{L}_{\vmu} = \mathcal{L}_{\nu}$ that maps $\vU = [\vsigma, \vu] \in X \subset \left[H(\text{div};\Omega)\right]^2 \times \left[H^1(\Omega)\right]^2$ into $Y = \left[L^2(\Omega)\right]^{2\times 2} \times \left[L^2(\Omega)\right]^2$:
\begin{equation}
	\mathcal{L}_{\nu}\vU = \begin{pmatrix}
		\mathcal{A} & - \epsilon\\
		\nabla \cdot & 0\end{pmatrix}\begin{pmatrix}
			\vsigma\\
			\vu
		\end{pmatrix} =
		\begin{pmatrix}
			\mathbf{-\mathcal{A}\vsigma_{\ell}}\\
			\mathbf{-\nabla \cdot \vsigma_{\ell}}
		\end{pmatrix}
\end{equation}
Here, $X$ is the subspace of functions that satisfy the corresponding homogeneous boundary conditions.  This form of $\mathcal{L}_{\nu}$ satisfies~\eqref{energy_balance} (see~\cite{cai2004least})
with respect to the norm
\begin{equation}
	\|(\vtau,\vv)\|_X^2 = \|\vepsilon(\vv)\|_0^2 + \|\vtau\|_0^2 + \|\nabla \cdot \vtau\|_0^2,
\end{equation}
where $\|\cdot\|_0$ is the $L^2(\Omega)$ norm for vector or tensor valued functions, depending on context.

We compute a discrete approximation using the subspace $X^h = \left(\text{RT}_0\right)^2 \times \left(\text{P}_1\right)^2$~---~i.e., approximate the rows of the stress tensor $\vsigma$ by functions in the lowest-order Raviart-Thomas space~\cite{raviart1977primal}, and the components of the displacement $\vu$ by piecewise linear polynomials.  For the mesh in Figure~\ref{elasticity_mesh}, this corresponds to $1,970$ degrees of freedom.

For the reference solutions $\vsigma_{\vmu}$ and $\vu_{\vmu}$, we perform one refinement on the original mesh, and approximate the solution by functions in $\left(\text{RT}_2\right)^2 \times \left(\text{P}_3\right)^2$, which corresponds to $56,546$ degrees of freedom.
We compute the lower bound to the coercivity constant via the SCM method, with a tolerance of 30\%.  Since the operator $\mathcal{L}_{\nu}$ only depends on Poisson's ratio, the SCM method is performed using 50 uniformly sample values of $\nu \in [0.1, 0.5]$.

For the basis construction, $\mathcal{D}_{\text{train}}$ consists of a $10\times 10$ uniform grid sampling of $(\nu, K) \in [0.1,0.5]\times [-0.25, 0.25]$.   Algorithm~\ref{ls-algorithm} terminates after computing $N = 5$ basis functions with a final tolerance of $\delta \approx 0.6445$.  That is, all reduced-order solutions $\vsigma_{\vmu}^N$, $\vu_{\vmu}^N$ corresponding to parameters in the sampled grid satisfy

\begin{align}\label{elasticity_effectivity}
	\begin{split}
		\frac{\|\rho_{\vmu}^N\|_0}{\sqrt{\alpha_{\text{LB}}(\vmu)}\|\hat{e}_{\vmu}^N\|_X} \leq \delta \approx 0.6445\\
		\frac{M^N(\vmu)}{\|e_{\vmu}^N\|_X} \leq \frac{1 + \delta}{1 - \delta} \approx 4.6266
	\end{split}
\end{align}
so that our error bound overestimates the true error by at most a factor of $4.6266\times$.

To test the reduced basis, we generate 100 randomly sampled $(\nu, K)$ pairs in $[0.1,0.5]\times [-0.25, 0.25]$ that were not involved in the basis construction.  In Figure~\ref{elasticity_error}, we see that the error bound generated by the reduced basis approximation is a rigorous bound for all parameters in the testing set.

\begin{figure}[ht]
\centering
\begin{subfigure}[b]{.48\textwidth}
  \vspace{0pt}
	\centering
  \includegraphics[height=1.72in]{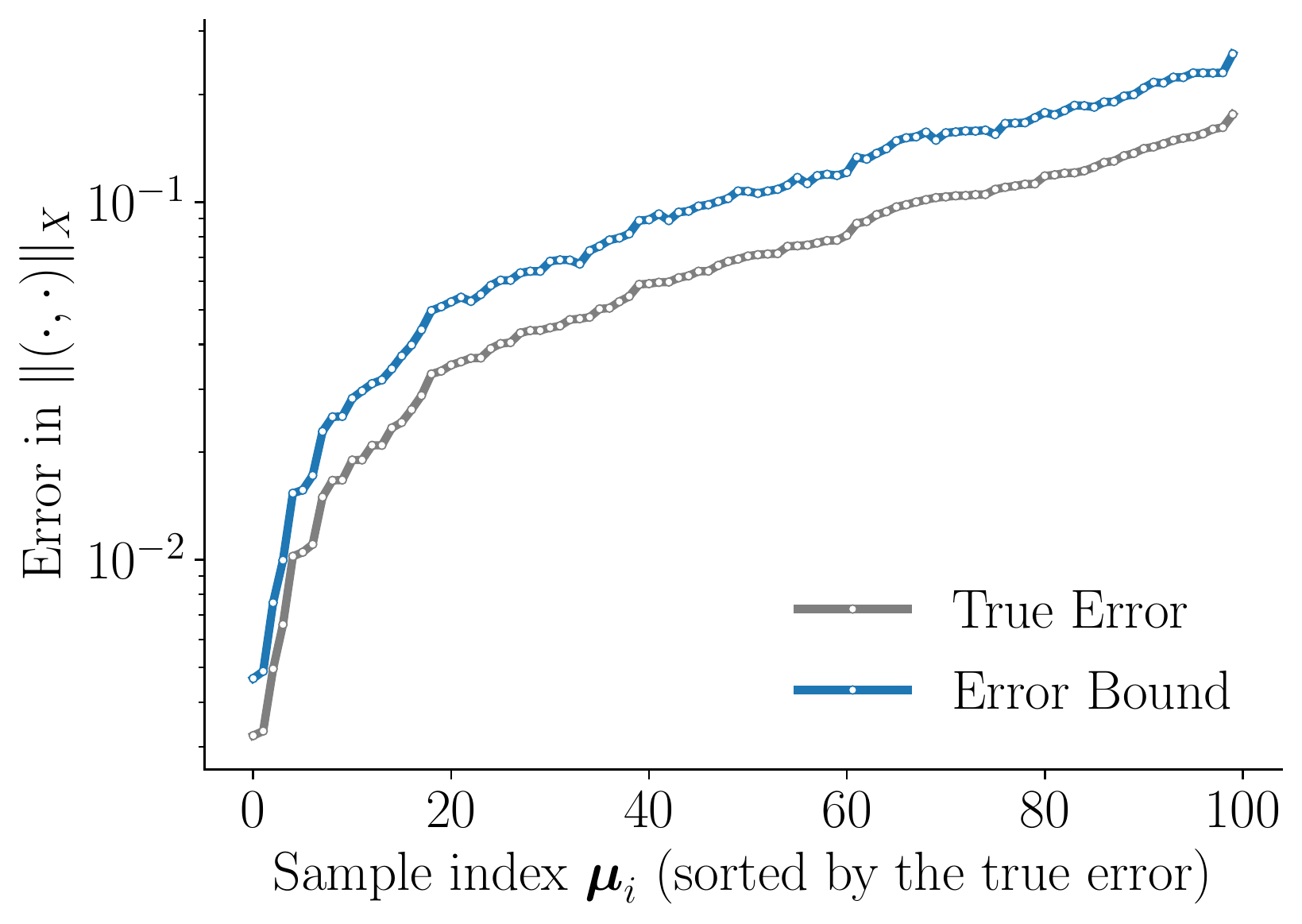}
  \caption{}\label{elasticity_error}
\end{subfigure}\hfill
\begin{subfigure}[b]{.48\textwidth}
  \vspace{0pt}
	\centering
	\includegraphics[height=1.72in]{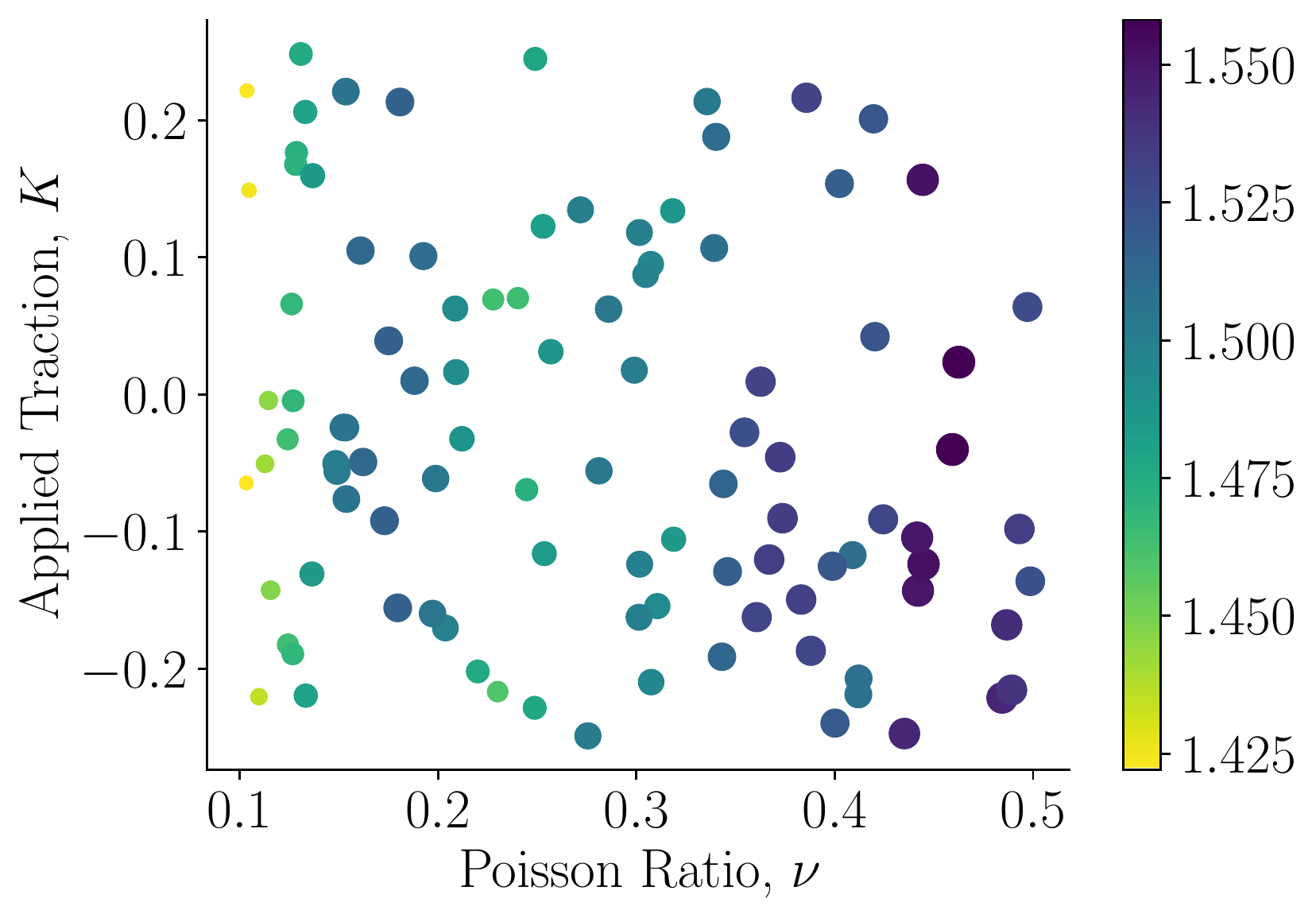}
	\caption{}\label{elasticity_effectivity_plot}
\end{subfigure}
\caption{(a) Error over 100 samples of $(\nu,K)$, computed with respect to a high-order representation of smooth solution, labeled ``True Error''. The error bounds generated by reduced basis solution is labeled as ``Error Bound''.  Note: The $x$-axis corresponds to the indices of the test parameters, which are ordered by magnitude of the true error.  (b) The effectivity ratios $\frac{M^N(\vmu)}{\|e_{\vmu}^N\|_X}$ over the test parameter set.}
\end{figure}

We next examine the bound on the effectivity ratio in~\eqref{elasticity_effectivity}, which is again pessimistic.  Indeed, the mean effectivity over the test set is approximately $1.501$ and no error bound has an effectivity larger than $1.558$, as shown in Figure~\ref{elasticity_effectivity_plot}.
\section{Conclusions and Future directions}\label{sec:conc}

In this paper we have introduced
a reduced basis method for parametrized elliptic partial differential equations using least-squares finite element methods.  We demonstrated that the first-order system formulation provides an opportunity to construct a rigorous error bound on the \textit{exact} solution by solving an auxiliary error problem.  This is in contrast to standard RB approaches that estimate the error with respect to a solution from a fixed finite-dimensional subspace.  Rigorous bounds on the effectivity of this estimate have also been established when the auxiliary equation properly resolves the error.

The least-squares finite element reduced basis method is also applicable to bases constructed via POD\@.  In the offline stage, the effectivity of the error bound no longer guides the sampling of the parameter domain since the POD algorithm relies on the decay of the eigenvalues to form the basis. However, the decay of the eigenvalues does not give a quantitative bound on the actual error of reduced basis approximations. The error estimate and the bound on the effectivity developed in this article may be used  after the POD basis is formed to  give an indication of whether the basis was truncated too soon. This in turn  should provide guidance in determining the number of basis functions needed to produce sufficiently accurate reduced basis solutions.

From the numerical experiments, we see that the bound on the effectivity, while not sharp, is still accurate.  Since the error of the RB solution is estimated with respect to the true solution, there may be regions of the parameter domain that require much finer mesh resolution or polynomial orders.  Using this reduced basis method as a guide to partitioning the parameter domain into separate reduced order models has the potential to increase accuracy and develop sharper effectivity bounds.

In many cases, an output or quantity of interest $Q(u_{\vmu})$ is of more interest than the solution itself.  Future work should  extend the least-squares finite element reduced basis method to these situations by developing computable bounds on the error $|Q(u_{\vmu}) - Q(u_{\vmu}^N)|$.\par
Finally, least-squares finite element methods are not the only variational method that re-formulates a PDE into a first-order system.  An investigation of other such methods, e.g., mixed Galerkin finite element methods, would make the results more broadly applicable.

\bibliographystyle{siamplain}
\bibliography{refs-romls}

\begin{thebibliography}{10}

\bibitem{adler2014error}
{\sc J.~Adler and P.~S. Vassilevski}, {\em Error analysis for constrained
  first-order system least-squares finite-element methods}, SIAM Journal on
  Scientific Computing, 36 (2014), pp.~A1071--A1088.

\bibitem{atherton2012competition}
{\sc T.~Atherton and J.~Adler}, {\em Competition of elasticity and
  flexoelectricity for bistable alignment of nematic liquid crystals on
  patterned substrates}, Physical Review E, 86 (2012), p.~040701.

\bibitem{barrault2004empirical}
{\sc M.~Barrault, Y.~Maday, N.~C. Nguyen, and A.~T. Patera}, {\em An
  ‘empirical interpolation’method: application to efficient reduced-basis
  discretization of partial differential equations}, Comptes Rendus
  Mathematique, 339 (2004), pp.~667--672.

\bibitem{bochev2001improved}
{\sc P.~Bochev and J.~Choi}, {\em Improved least-squares error estimates for
  scalar hyperbolic problems}, Computational Methods in Applied Mathematics, 1
  (2001), pp.~115--124.

\bibitem{bochev1999analysis}
{\sc P.~Bochev, T.~A. Manteuffel, and S.~F. McCormick}, {\em Analysis of
  velocity-flux least-squares principles for the navier--stokes equations: Part
  ii}, SIAM Journal on Numerical Analysis, 36 (1999), pp.~1125--1144.

\bibitem{bochev2009least}
{\sc P.~B. Bochev and M.~D. Gunzburger}, {\em Least-squares finite element
  methods}, vol.~166, Springer Science \& Business Media, 2009.

\bibitem{boffi2010finite}
{\sc D.~Boffi}, {\em Finite element approximation of eigenvalue problems.},
  Acta Numer., 19 (2010), pp.~1--120.

\bibitem{boffi2013mixed}
{\sc D.~Boffi, F.~Brezzi, M.~Fortin, et~al.}, {\em Mixed finite element methods
  and applications}, vol.~44, Springer, 2013.

\bibitem{bond2010first}
{\sc S.~D. Bond, J.~H. Chaudhry, E.~C. Cyr, and L.~N. Olson}, {\em A
  first-order system least-squares finite element method for the
  poisson-boltzmann equation}, Journal of computational chemistry, 31 (2010),
  pp.~1625--1635.

\bibitem{boyaval2008reduced}
{\sc S.~Boyaval}, {\em Reduced-basis approach for homogenization beyond the
  periodic setting}, Multiscale Modeling \& Simulation, 7 (2008), pp.~466--494.

\bibitem{braess2007finite}
{\sc D.~Braess}, {\em Finite elements: Theory, fast solvers, and applications
  in solid mechanics}, Cambridge University Press, 2007.

\bibitem{brannick2010least}
{\sc J.~Brannick, C.~Ketelsen, T.~Manteuffel, and S.~McCormick}, {\em
  Least-squares finite element methods for quantum electrodynamics}, SIAM
  Journal on Scientific Computing, 32 (2010), pp.~398--417.

\bibitem{bui2008model}
{\sc T.~Bui-Thanh, K.~Willcox, and O.~Ghattas}, {\em Model reduction for
  large-scale systems with high-dimensional parametric input space}, SIAM
  Journal on Scientific Computing, 30 (2008), pp.~3270--3288.

\bibitem{cai2004least}
{\sc Z.~Cai and G.~Starke}, {\em Least-squares methods for linear elasticity},
  SIAM Journal on Numerical Analysis, 42 (2004), pp.~826--842.

\bibitem{carlberg2017galerkin}
{\sc K.~Carlberg, M.~Barone, and H.~Antil}, {\em Galerkin v. least-squares
  petrov--galerkin projection in nonlinear model reduction}, Journal of
  Computational Physics, 330 (2017), pp.~693--734.

\bibitem{carlberg2011efficient}
{\sc K.~Carlberg, C.~Bou-Mosleh, and C.~Farhat}, {\em Efficient non-linear
  model reduction via a least-squares petrov--galerkin projection and
  compressive tensor approximations}, International Journal for numerical
  methods in engineering, 86 (2011), pp.~155--181.

\bibitem{chaudhry2018efficient}
{\sc J.~H. Chaudhry, N.~Burch, and D.~Estep}, {\em Efficient distribution
  estimation and uncertainty quantification for elliptic problems on domains
  with stochastic boundaries}, SIAM/ASA Journal on Uncertainty Quantification,
  6 (2018), pp.~1127--1150.

\bibitem{chen2008monotonic}
{\sc Y.~Chen, J.~S. Hesthaven, Y.~Maday, and J.~Rodr{\'\i}guez}, {\em A
  monotonic evaluation of lower bounds for inf-sup stability constants in the
  frame of reduced basis approximations}, Comptes Rendus Mathematique, 346
  (2008), pp.~1295--1300.

\bibitem{chen2009improved}
{\sc Y.~Chen, J.~S. Hesthaven, Y.~Maday, and J.~Rodr{\'\i}guez}, {\em Improved
  successive constraint method based a posteriori error estimate for reduced
  basis approximation of 2d maxwell's problem}, ESAIM: Mathematical Modelling
  and Numerical Analysis, 43 (2009), pp.~1099--1116.

\bibitem{de2004least}
{\sc H.~De~Sterck, T.~A. Manteuffel, S.~F. McCormick, and L.~Olson}, {\em
  Least-squares finite element methods and algebraic multigrid solvers for
  linear hyperbolic pdes}, SIAM Journal on Scientific Computing, 26 (2004),
  pp.~31--54.

\bibitem{de2005numerical}
{\sc H.~De~Sterck, T.~A. Manteuffel, S.~F. McCormick, and L.~Olson}, {\em
  Numerical conservation properties of h (div)-conforming least-squares finite
  element methods for the burgers equation}, SIAM Journal on Scientific
  Computing, 26 (2005), pp.~1573--1597.

\bibitem{deparis2009}
{\sc S.~Deparis and G.~Rozza}, {\em Reduced basis method for
  multi-parameter-dependent steady navier--stokes equations: applications to
  natural convection in a cavity}, Journal of Computational Physics, 228
  (2009), pp.~4359--4378.

\bibitem{dihlmann2015}
{\sc M.~A. Dihlmann and B.~Haasdonk}, {\em Certified pde-constrained parameter
  optimization using reduced basis surrogate models for evolution problems},
  Computational Optimization and Applications, 60 (2015), pp.~753--787.

\bibitem{grepl2007certified}
{\sc M.~A. Grepl, N.~C. Nguyen, K.~Veroy, A.~T. Patera, and G.~R. Liu}, {\em
  Certified rapid solution of partial differential equations for real-time
  parameter estimation and optimization}, in Real-time PDE-constrained
  optimization, SIAM, 2007, pp.~199--216.

\bibitem{grepl2005}
{\sc M.~A. Grepl and A.~T. Patera}, {\em A posteriori error bounds for
  reduced-basis approximations of parametrized parabolic partial differential
  equations}, ESAIM: Mathematical Modelling and Numerical Analysis, 39 (2005),
  pp.~157--181.

\bibitem{haasdonk2017reduced}
{\sc B.~Haasdonk}, {\em Reduced basis methods for parametrized pdes—a
  tutorial introduction for stationary and instationary problems}, Model
  reduction and approximation: theory and algorithms, 15 (2017), p.~65.

\bibitem{HB2013}
{\sc M.~W. Hess and P.~Benner}, {\em Fast evaluation of time--harmonic
  maxwell's equations using the reduced basis method}, IEEE Transactions on
  Microwave Theory and Techniques, 61 (2013), pp.~2265--2274.

\bibitem{hesthaven2014efficient}
{\sc J.~S. Hesthaven, B.~Stamm, and S.~Zhang}, {\em Efficient greedy algorithms
  for high-dimensional parameter spaces with applications to empirical
  interpolation and reduced basis methods}, ESAIM: Mathematical Modelling and
  Numerical Analysis, 48 (2014), pp.~259--283.

\bibitem{heys2007alternative}
{\sc J.~J. Heys, E.~Lee, T.~A. Manteuffel, and S.~F. McCormick}, {\em An
  alternative least-squares formulation of the navier--stokes equations with
  improved mass conservation}, Journal of Computational Physics, 226 (2007),
  pp.~994--1006.

\bibitem{huynh2010natural}
{\sc D.~Huynh, D.~Knezevic, Y.~Chen, J.~S. Hesthaven, and A.~Patera}, {\em A
  natural-norm successive constraint method for inf-sup lower bounds}, Computer
  Methods in Applied Mechanics and Engineering, 199 (2010), pp.~1963--1975.

\bibitem{huynh2007reduced}
{\sc D.~Huynh and A.~Patera}, {\em Reduced basis approximation and a posteriori
  error estimation for stress intensity factors}, International Journal for
  Numerical Methods in Engineering, 72 (2007), pp.~1219--1259.

\bibitem{jiang2013least}
{\sc B.-n. Jiang}, {\em The Least-Squares Finite Element Method: Theory and
  Applications in Computational Fluid Dynamics and Electromagnetics}, Springer
  Science \& Business Media, 2013.

\bibitem{kahlbacher2007galerkin}
{\sc M.~Kahlbacher and S.~Volkwein}, {\em Galerkin proper orthogonal
  decomposition methods for parameter dependent elliptic systems}, Discussiones
  Mathematicae, Differential Inclusions, Control and Optimization, 27 (2007),
  pp.~95--117.

\bibitem{krause2017adaptive}
{\sc R.~Krause, B.~M{\"u}ller, and G.~Starke}, {\em An adaptive least-squares
  mixed finite element method for the signorini problem}, Numerical Methods for
  Partial Differential Equations, 33 (2017), pp.~276--289.

\bibitem{leibs2015nested}
{\sc C.~A. Leibs and T.~A. Manteuffel}, {\em Nested iteration and first-order
  systems least squares for a two-fluid electromagnetic darwin model}, SIAM
  Journal on Scientific Computing, 37 (2015), pp.~S314--S333.

\bibitem{liang2002proper}
{\sc Y.~Liang, H.~Lee, S.~Lim, W.~Lin, K.~Lee, and C.~Wu}, {\em Proper
  orthogonal decomposition and its applications—part i: Theory}, Journal of
  Sound and vibration, 252 (2002), pp.~527--544.

\bibitem{maday2002}
{\sc Y.~Maday, A.~T. Patera, and D.~V. Rovas}, {\em A blackbox reduced-basis
  output bound method for noncoercive linear problems}, Studies in Mathematics
  and its Applications, 31 (2002), pp.~533--569.

\bibitem{manteuffel1998least}
{\sc T.~A. Manteuffel and K.~J. Ressel}, {\em Least-squares finite-element
  solution of the neutron transport equation in diffusive regimes}, SIAM
  journal on numerical analysis, 35 (1998), pp.~806--835.

\bibitem{manzoni2016automatic}
{\sc A.~Manzoni and F.~Negri}, {\em Automatic reduction of pdes defined on
  domains with variable shape}, in MATHICSE Technical Report, EPFL, 2016.

\bibitem{oliveira2007reduced}
{\sc I.~Oliveira and A.~Patera}, {\em Reduced-basis techniques for rapid
  reliable optimization of systems described by affinely parametrized coercive
  elliptic partial differential equations}, Optimization and Engineering, 8
  (2007), pp.~43--65.

\bibitem{2003_Olson_thesis}
{\sc L.~N. Olson}, {\em Multilevel Least-Squares Finite Element Methods for
  Hyperbolic {PDEs}}, PhD thesis, University of Colorado at Boulder, Department
  of Applied Mathematics, 2003.

\bibitem{pehlivanov1994least}
{\sc A.~Pehlivanov, G.~Carey, and R.~Lazarov}, {\em Least-squares mixed finite
  elements for second-order elliptic problems}, SIAM Journal on Numerical
  Analysis, 31 (1994), pp.~1368--1377.

\bibitem{prud2001reliable}
{\sc C.~Prud’Homme, D.~V. Rovas, K.~Veroy, L.~Machiels, Y.~Maday, A.~T.
  Patera, and G.~Turinici}, {\em Reliable real-time solution of parametrized
  partial differential equations: Reduced-basis output bound methods}, J.
  Fluids Eng., 124 (2001), pp.~70--80.

\bibitem{quarteroni2015reduced}
{\sc A.~Quarteroni, A.~Manzoni, and F.~Negri}, {\em Reduced basis methods for
  partial differential equations: an introduction}, vol.~92, Springer, 2016.

\bibitem{ramm2003error}
{\sc E.~Ramm, E.~Rank, R.~Rannacher, K.~Schweizerhof, E.~Stein, W.~Wendland,
  G.~Wittum, P.~Wriggers, and W.~Wunderlich}, {\em Error-controlled adaptive
  finite elements in solid mechanics}, John Wiley \& Sons, 2003.

\bibitem{rathgeber_etal_firedrake}
{\sc F.~Rathgeber, D.~A. Ham, L.~Mitchell, M.~Lange, F.~Luporini, A.~T.~T.
  Mcrae, G.-T. Bercea, G.~R. Markall, and P.~H.~J. Kelly}, {\em Firedrake:
  Automating the finite element method by composing abstractions}, ACM Trans.
  Math. Softw., 43 (2016), \url{https://doi.org/10.1145/2998441},
  \url{https://doi.org/10.1145/2998441}.

\bibitem{raviart1977primal}
{\sc P.-A. Raviart and J.-M. Thomas}, {\em Primal hybrid finite element methods
  for 2nd order elliptic equations}, Mathematics of computation, 31 (1977),
  pp.~391--413.

\bibitem{rozza2007reduced}
{\sc G.~Rozza, D.~B.~P. Huynh, and A.~T. Patera}, {\em Reduced basis
  approximation and a posteriori error estimation for affinely parametrized
  elliptic coercive partial differential equations}, Archives of Computational
  Methods in Engineering, 15 (2007), p.~1.

\bibitem{rozza2013reduced}
{\sc G.~Rozza, D.~P. Huynh, and A.~Manzoni}, {\em Reduced basis approximation
  and a posteriori error estimation for stokes flows in parametrized
  geometries: roles of the inf-sup stability constants}, Numerische Mathematik,
  125 (2013), pp.~115--152.

\bibitem{schmidt2020rigorous}
{\sc A.~Schmidt, D.~Wittwar, and B.~Haasdonk}, {\em Rigorous and effective
  a-posteriori error bounds for nonlinear problems—application to rb
  methods}, Advances in Computational Mathematics, 46 (2020), pp.~1--30.

\bibitem{sen2008}
{\sc S.~Sen}, {\em Reduced-basis approximation and a posteriori error
  estimation for many-parameter heat conduction problems}, Numerical Heat
  Transfer, Part B: Fundamentals, 54 (2008), pp.~369--389.

\bibitem{sen2006natural}
{\sc S.~Sen, K.~Veroy, D.~Huynh, S.~Deparis, N.~C. Nguyen, and A.~T. Patera},
  {\em “natural norm” a posteriori error estimators for reduced basis
  approximations}, Journal of Computational Physics, 217 (2006), pp.~37--62.

\bibitem{tang1993least}
{\sc L.~Q. Tang and T.~T. Tsang}, {\em A least-squares finite element method
  for time-dependent incompressible flows with thermal convection},
  International Journal for Numerical Methods in Fluids, 17 (1993),
  pp.~271--289.

\bibitem{veroy2003posteriori}
{\sc K.~Veroy, C.~Prud'Homme, D.~Rovas, and A.~Patera}, {\em A posteriori error
  bounds for reduced-basis approximation of parametrized noncoercive and
  nonlinear elliptic partial differential equations}, in 16th AIAA
  Computational Fluid Dynamics Conference, 2003, p.~3847.

\bibitem{volkwein2011model}
{\sc S.~Volkwein}, {\em Model reduction using proper orthogonal decomposition},
  Lecture Notes, Institute of Mathematics and Scientific Computing, University
  of Graz. see http://www. uni-graz. at/imawww/volkwein/POD. pdf, 1025 (2011).

\bibitem{yang1999some}
{\sc D.-p. Yang}, {\em Some least-squares galerkin procedures for first-order
  time-dependent convection--diffusion system}, Computer methods in applied
  mechanics and engineering, 180 (1999), pp.~81--95.

\bibitem{yano2014}
{\sc M.~Yano}, {\em A space-time petrov--galerkin certified reduced basis
  method: Application to the boussinesq equations}, SIAM Journal on Scientific
  Computing, 36 (2014), pp.~A232--A266.

\end{thebibliography}
\end{document}